\newtheorem{thm}{Theorem}[section]
\newtheorem{cor}[thm]{Corollary}
\newtheorem{lem}[thm]{Lemma}
\newtheorem{prop}[thm]{Proposition}
\theoremstyle{definition}
\newtheorem{ex}[thm]{Example}
\theoremstyle{definition}
\newtheorem{defn}[thm]{Definition}
\theoremstyle{definition}
\newtheorem{rem}[thm]{Remark}
\theoremstyle{definition}
\def\C{\mathbb C}
\def\Z{\mathbb Z}
\def\icis{\textsc{icis}}
\def\dim{\operatorname{dim}}
\def\D{\operatorname{Derlog}}
\def\Der{\operatorname{Der}}
\def\rad{\operatorname{rad}}
\def\id{\operatorname{id}}
\def\syz{\operatorname{syz}}
\def\Lif{\operatorname{Lif}}
\def\Low{\operatorname{Low}}
\def\ord {\operatorname{ord}}
\def\O{\mathcal O}
\def\m{\mathbf m}
\def\V{\mathbf V}
\def\I{\mathbf I}
\def\J{\mathbf J}
\def\geq{\geqslant}
\def\leq{\leqslant}
\def\*{\color{red}\blacksquare}
\def\+{\color{green}\blacksquare}
\subjclass[$2010$ Mathematics Subject Classification]{Primary
32S05; Secondary 32B30}
\newcommand{\transv}{\mathrel{\text{\tpitchfork}}}
\newcommand{\tpitchfork}{%
  \vbox{
    \baselineskip\z@skip
    \lineskip-.52ex
    \lineskiplimit\maxdimen
    \m@th
    \ialign{##\crcr\hidewidth\smash{$-$}\hidewidth\crcr$\pitchfork$\crcr}
  }%
}
\def\algt{\transv^\textnormal{\tiny{º}}_\textnormal{{\tiny alg}}}
\begin{document}

\title[Mixed Bruce-Roberts numbers]{Mixed Bruce-Roberts numbers}

\author{Carles Bivi\`a-Ausina}
\address{
Institut Universitari de Matem\`atica Pura i Aplicada,
Universitat Polit\`ecnica de Val\`encia,
Cam\'i de Vera, s/n,
46022 Val\`encia, Spain}
\email{carbivia@mat.upv.es}

\author{Maria Aparecida Soares Ruas}
\address{
Instituto de Ciências Matemáticas e de Computação,
Universidade de São Paulo,
Avda. do Trabalhador São-carlense, 400,
13560-970 São Carlos, SP, Brazil}
\email{maasruas@icmc.usp.br}

\keywords{Milnor number, logarithmic vector fields, Tjurina number, finite determinacy}

\thanks{The first author was partially supported by DGICYT Grant MTM2015-64013-P and FAPESP Grant 2014/00304-2.
The second author was partially supported by CNPq Grant 306306/2015-8 and FAPESP Grant 2014/00304-2.
}

\begin{abstract}
We extend the notion of $\mu^*$-sequence and Tjurina number of functions to the framework of Bruce-Roberts numbers, that is, to pairs formed
by the germ at $0$ of a complex analytic variety $X\subseteq \C^n$ and a finitely $\mathcal R(X)$-determined analytic function germ
$f:(\C^n,0)\to (\C,0)$. We analyze some fundamental properties of these numbers.
\end{abstract}

\maketitle

\section{Introduction}


Let $\O_n$ denote the ring of analytic function germs $(\C^n,0)\to \C$ and let $\m_n$ be the
maximal ideal of $\O_n$. If $f\in\O_n$ has an isolated singularity, then we denote by
$\mu(f)$ the Minor number of $f$. That is, $\mu(f)=\dim_\C \O_n/J(f)$, where $J(f)=\langle \frac{\partial f}{\partial x_1},
\dots, \frac{\partial f}{\partial x_n}\rangle$ is the Jacobian ideal of $f$. If $H$ is a general hyperplane through the origin in $\mathbb C^n,$ then we may speak of the Milnor number of
the restriction of $f$ to $H,$ denoted by
$\mu^{(n-1)}(f)$. More generally, in \cite[p.\,300]{Cargese} Teissier introduced the sequence $\mu^*(f)=(\mu^{(n)}(f), \dots, \mu^{(1)}(f),\mu^{(0)}(f))$, where $\mu^{(i)}(f)$ denotes the Milnor number of the
restriction of $f$ to a generic linear subspace of dimension $i$ of $\C^n$, for $i=1,\dots, n$. If
$F:(\mathbb C^n\times \mathbb C,0)\to (\C,0)$ defines a
family of hypersurfaces  with isolated singularities, $f_t^{-1}(0)$, where $f_t(x)=F(t,x)$, then Teissier proves that the constancy of the sequence $\mu^{*}(f_t)$
implies the Whitney equisingularity of the pair $(F^{-1}(0)\setminus D, D),$ where $D \subset \mathbb C$ is a small disc around $0$ in the parameter space.
 In \cite{BR} Bruce and Roberts extended the notion of Milnor number to pairs
formed by an analytic function $f\in\O_n$ and an analytic subvariety $X$ of $\C^n$ (see Definition \ref{muXf}).
This number, denoted $\mu_X(f)$, is called the {\em multiplicity of $f$ on $X$} in \cite{BR}. In  some references,
$\mu_X(f)$ is called the {\em Bruce-Roberts' Milnor number of $f$ with respect to $X$}. We refer to \cite{ART2012,Gru, NOT2011}  for recent results on the relations of $\mu_X(f)$  with other classical invariants and partial results on its role on equisingularity problems in the relative case.


Let $f\in\O_n$ and let $X$ denote the germ at $0$ of an analytic subvariety of $\C^n$. This article has several purposes. We derive some consequences of the formula for $\mu_X(f)$ obtained in \cite{NOT2011} in the case
where $X$ is a weighted homogeneous hypersurface with an isolated singularity at the origin (see Theorem \ref{claucashomog}).
In particular, in the case where $X$ is a linear hyperplane in $\C^n$, there appears a relation (see Proposition \ref{primerasumademuis}) that reminds the formula of Teissier saying that if $f\in\O_n$ has an isolated singularity at the origin, then
$\mu(f)+\mu^{(n-1)}(f)$ is equal to the Samuel multiplicity of $J(f)$ in the quotient ring $\frac{\O_n}{\langle f \rangle}$ (see \cite[p.\,322]{Cargese}).

Let us observe that this multiplicity is greater than or equal to $\tau(f)$, where $\tau(f)$ is the Tjurina number of $f$, which is defined
as the colengh of the ideal $\langle f\rangle+J(f)$. By analogy with the definition of $\mu_X(f)$, in Section \ref{Tjurina}
we introduce the Bruce-Roberts' Tjurina number of $f$ with respect to $X$, which we will denote by $\tau_X(f)$. We obtain an upper bound for the quotient $\frac{\mu_X(f)}{\tau_X(f)}$ and characterize the corresponding equality.

We also extend the notion of $\mu^*$-sequence of functions to the framework of Bruce-Roberts numbers, that is, to pairs formed
by the germ at $0$ of a complex analytic variety $X\subseteq \C^n$ and a finitely $\mathcal R(X)$-determined analytic function germ
$f:(\C^n,0)\to (\C,0)$. We analyze some of the fundamental algebraic and geometric properties of these numbers.
The analogue of Teissier's result in this setting, namely, whether or not
the constancy of $\mu_X^{*}(f_t)$ implies the Whitney equisingularity of the family of function germs with isolated singularity $f_t$ with respect to a singular
variety $X$, remains an open question.


\section{The Bruce-Roberts' Milnor number}\label{qgenerals}


Let $X$ be the germ at $0$ of an analytic subvariety of $\C^n$ (for short,
we will say that $X$ is an analytic subvariety of $(\C^n,0)$). Let $I(X)$ denote the ideal of $\O_n$
generated by the germs of $\O_n$ vanishing on $X$. 
We denote by $\Theta_X$ the $\O_n$-module of germs of vector fields of $\C^n$ at $0$ which are tangent to $X$. That is
$$
\Theta_{X}=\big\{\delta \in\O_n^n: \delta(I(X))\subseteq I(X)\big\}.
$$
This module is also usually denoted by $\D(X)$ (see for instance \cite{DamonMemoirsAMS, DamonTop}).
The elements of $\D(X)$ are also known as {\it logarithmic vector fields of $X$}.
We recall that $\Theta_X$ defines a coherent sheaf of modules in a small enough neighbourhood $U$ of $0\in\C^n$.
If $x\in U$, then we denote by $\Theta_{X,x}$ the corresponding stalk at $x$. We also define the vector space
$\Theta_X(x)=\{\delta(x): \delta\in\Theta_X\}\subseteq \C^n$.
We identify any given element $\delta=(\delta_1,\dots, \delta_n)\in\O_n^n$ with the derivation
$\delta_1\frac{\partial}{\partial x_1}+\cdots+\delta_n\frac{\partial}{\partial x_n}\in\Der(\O_n)$.

Let $R$ denote an arbitrary ring and let $M$ be an $R$-module.
Given elements $u_1,\dots, u_s\in M$, we denote by $\syz(u_1,\dots, u_s)$ the module of syzygies of $\{u_1,\dots, u_s\}$.
That is, $\syz(u_1,\dots, u_s)$ is the $R$-submodule of $R^s$ formed by those $(g_1,\dots,g_s)\in R^s$ satisfying that $g_1u_1+\cdots+g_su_s=0$.
Let $I$ be an ideal of $R$, then we say that $I$ is {\it reduced} when $I$ is equal to its own radical.

The computation of $\Theta_X$ for general classes of varieties $X$ is a hard problem (see Theorem \ref{Xicisqh}). However, we will apply the following
fact in order to compute $\Theta_X$ with {\it Singular} \cite{Singular}.


\begin{lem}\label{syzyg}
Let $h=(h_1,\dots, h_m):(\C^n,0)\to (\C^m,0)$ be an analytic map such that the ideal $\langle h_1,\dots, h_m\rangle$ is reduced.
Let $X=h^{-1}(0)$. Let $D_h$ be the set of elements of $\O_n^m$ given by the columns of the matrix
$$
\left[
  \begin{array}{ccccccccccccc}
\frac{\partial h_1}{\partial x_1}&\cdots & \frac{\partial h_1}{\partial x_n}& h_1 & \cdots & h_m & 0 & \cdots &0 & \cdots & 0 &\cdots & 0 \\
\frac{\partial h_2}{\partial x_1}&\cdots & \frac{\partial h_2}{\partial x_n}& 0 & \cdots & 0 & h_1 & \cdots & h_m &  & 0 &\cdots & 0 \\
\vdots &  & \vdots & \vdots &  & \vdots & \vdots &  & \vdots & \ddots & \vdots  & & \vdots \\
\frac{\partial h_m}{\partial x_1}&\cdots & \frac{\partial h_m}{\partial x_n}& 0 & \cdots & 0 & 0 & \cdots &0 &  & h_1 & \cdots & h_m
  \end{array}
\right]
$$
Then $\Theta_X=\pi_n(\syz(D_h))$, where $\pi_n:\O_n^{n+m^2}\to \O_n^n$ is the projection onto the first $n$ components.
\end{lem}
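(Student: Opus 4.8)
The plan is to identify $\Theta_X$ with the set of vector fields $\delta$ satisfying $\delta(h_j)\in I(X)$ for every $j$, and then to recognize that condition as a syzygy among the columns of $D_h$. The first, and most essential, step is to pin down the ideal $I(X)$. Since $X=h^{-1}(0)$, R\"uckert's analytic Nullstellensatz gives $I(X)=\sqrt{\langle h_1,\dots,h_m\rangle}$; because $\langle h_1,\dots,h_m\rangle$ is assumed reduced, i.e.\ equal to its own radical, we obtain $I(X)=\langle h_1,\dots,h_m\rangle$. This is precisely where the reducedness hypothesis is used, and it is the point one must not overlook: without it $I(X)$ could be strictly larger than $\langle h_1,\dots,h_m\rangle$ and the module computed below would not be $\Theta_X$.

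Next I would reduce the tangency condition to the generators. A derivation $\delta$ lies in $\Theta_X$ exactly when $\delta(I(X))\subseteq I(X)$, and since $\delta$ obeys the Leibniz rule and $I(X)=\langle h_1,\dots,h_m\rangle$, for any $g=\sum_j a_jh_j$ one has $\delta(g)=\sum_j \delta(a_j)\,h_j+\sum_j a_j\,\delta(h_j)$. The first sum always lies in $I(X)$, so $\delta(I(X))\subseteq I(X)$ holds if and only if $\delta(h_j)\in I(X)$ for each $j=1,\dots,m$. Thus $\Theta_X=\{\delta\in\O_n^n:\delta(h_j)\in\langle h_1,\dots,h_m\rangle\ \text{for all } j\}$.

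Finally I would unwind the matrix $D_h$. Label its columns so that the first $n$ are the columns of the Jacobian matrix $(\partial h_j/\partial x_k)$, while the remaining $m^2$ columns are indexed by pairs $(j,l)$ with $1\le j,l\le m$, the column $(j,l)$ having $h_l$ in row $j$ and $0$ elsewhere. A syzygy is then a tuple $(\delta_1,\dots,\delta_n,(c_{jl})_{j,l})$ whose $j$-th coordinate equation reads $\sum_{k=1}^n\delta_k\,\partial h_j/\partial x_k+\sum_{l=1}^m c_{jl}\,h_l=0$, i.e.\ $\delta(h_j)=-\sum_l c_{jl}\,h_l\in\langle h_1,\dots,h_m\rangle$. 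Hence the first $n$ coordinates of any syzygy form exactly a vector field $\delta$ with $\delta(h_j)\in I(X)$ for all $j$; conversely, given such a $\delta$, any chosen expression $\delta(h_j)=-\sum_l c_{jl}h_l$ supplies the extra $m^2$ coordinates completing it to a syzygy. Applying $\pi_n$ therefore yields $\pi_n(\syz(D_h))=\Theta_X$, as claimed.

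The main obstacle is the first step: one genuinely needs the analytic Nullstellensatz together with the reduced hypothesis to guarantee $I(X)=\langle h_1,\dots,h_m\rangle$. The remaining two steps are formal, reducing to the Leibniz rule and to careful bookkeeping of the block structure of $D_h$.
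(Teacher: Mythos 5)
Your proposal is correct and follows essentially the same route as the paper's proof: both use the reducedness hypothesis (via the Nullstellensatz) to identify $I(X)=\langle h_1,\dots,h_m\rangle$, reduce tangency to the condition $\delta(h_j)\in\langle h_1,\dots,h_m\rangle$ for each $j$, and then read off this condition as a syzygy among the columns of $D_h$. Your version merely spells out the Leibniz-rule reduction to generators and the Nullstellensatz step, which the paper compresses into a single sentence.
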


\begin{proof}
Let $I=\langle h_1,\dots, h_m\rangle$. Since $I$ is reduced, given an element $\delta\in\O_n^n$, we have that
$\delta=(\delta_1,\dots,\delta_n)$ belongs to $\Theta_X$ if and only if $\delta(h_i)\in I$, for all $i=1,\dots, m$, which is to say that
there exist $a_1^i,\dots, a_m^i\in\O_n$ such that
$\delta_1\frac{\partial h_i}{\partial x_1}+\cdots +\delta_n\frac{\partial h_i}{\partial x_n}=
a_1^ih_1+\cdots+a_m^ih_m$, for all $i=1,\dots, m$. This latter condition is equivalent to saying that the element of $\O_n^{n+m^2}$ given by
$(\delta_1,\dots, \delta_n, -a_1^1,\dots, -a_m^1,\dots, -a_1^m,\dots, -a_m^m)$ belongs to $D_h$. Hence the result follows.
\end{proof}

If $f\in\O_n$, then we denote by $J_X(f)$ the ideal of $\O_n$ generated by
$\{\delta(f): \delta\in \Theta_X\}$. In particular, we have the inclusion $J_X(f)\subseteq J(f)$.

\begin{defn}\label{muXf}
Let $X$ be an analytic subvariety of $(\C^n,0)$ and let $f\in\O_n$. We define
\begin{equation}\label{defnBR}
\mu_X(f)=\dim_\C\frac{\mathcal O_n}{J_X(f)}.
\end{equation}
When the colength on the right of (\ref{defnBR}) is finite, the number
$\mu_X(f)$ is called the {\it multiplicity of $f$ on $X$} in \cite{BR}. In some references, $\mu_X(f)$ is called the {\it Bruce-Roberts' Milnor number of $f$ with respect to $X$} (see for instance \cite{ART2012,Gru,NOT2011}).
\end{defn}

Let $f\in\O_n$. Let us remark that, if $J_X(f)$ has finite colength, then $J(f)$ has also finite colength and $\mu_X(f)\geq \mu(f)$, since $J_X(f)\subseteq J(f)$. We also point out that when $X=\C^n$, then $\Theta_{X}=\O_n^n$ and consequently $\mu_X(f)=\mu(f)$.
When $X=\{0\}\subseteq \C^n$, then $\Theta_X=\m_n\oplus\cdots\oplus\m_n$ and hence $J_X(f)=\m_n J(f)$.

If $X\subseteq (\C^n,0)$ is the germ at $0$ of an analytic subvariety and $U$ is a sufficiently small neighbourhood of $0\in\C^n$, then
in \cite{BR} Bruce and Roberts introduced the notion of logarithmic stratification of $U$ with respect to $X$ (see \cite[Definition 1.6]{BR}),
based on the analogous notion for analytic hypersurfaces of $\C^n$ defined by Saito in \cite{Saito}. If $\{X_\alpha\}_{\alpha\in A}$
denotes this stratification, then we shall refer to $\{X\cap X_\alpha\}_{\alpha\in A}$ as the {\it logarithmic stratification of $X$}.
Some of the fundamental properties of $\{X_\alpha\}_{\alpha\in A}$ is that each stratum $X_\alpha$ is a smooth connected immersed submanifold
of $U$ and if $x\in U$ lies in a stratum $X_\alpha$, then the tangent space $T_xX_\alpha$ to $X_\alpha$ at $x$ coincides with $\Theta_X(x)$.
The germ $X$ is said to be {\it holonomic} if for some neighbourhood $U$ of $0$ in $\C^n$ the logarithmic stratification of $U$ with respect to $X$ has only finitely many strata.

Here we recall the following result from \cite[p.\,64]{BR}.

\begin{thm}\label{nuXfinita}\cite[p.\,64]{BR}
Let $X$ be an analytic subvariety of $(\C^n,0)$ and let $f\in\O_n$. Then the following conditions are equivalent:
\begin{enumerate}
\item $\mu_X(f)$ is finite.
\item $V(J_X(f))\subseteq\{0\}$.
\item $f$ has an $\mathcal R(X)$-versal unfolding.
\item $f$ is finitely $\mathcal R(X)$-determined.
\item The restriction of $f$ to each logarithmic stratum of $X$ is a submersion except, possibly, at $0$.
\end{enumerate}
\end{thm}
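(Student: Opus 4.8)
The plan is to break the five-fold equivalence into three blocks of a different character --- the algebraic equivalence $(1)\Leftrightarrow(2)$, the geometric reformulation $(2)\Leftrightarrow(5)$, and the deformation-theoretic block $(1)\Leftrightarrow(4)\Leftrightarrow(3)$ --- and then to glue them through condition $(1)$ (equivalently $(2)$), which serves as the common hub.

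I would first dispose of $(1)\Leftrightarrow(2)$ using the analytic Nullstellensatz. Since $\O_n$ is Noetherian and local with maximal ideal $\m_n$, the colength $\dim_\C\O_n/J_X(f)$ is finite if and only if $J_X(f)$ is $\m_n$-primary, that is $\rad J_X(f)=\m_n$, which by the Nullstellensatz amounts to $V(J_X(f))=\{0\}$ as a germ. Every generator $\delta(f)$ of $J_X(f)$ vanishes at the origin, so $0\in V(J_X(f))$ and the conditions $V(J_X(f))\subseteq\{0\}$ and $V(J_X(f))=\{0\}$ coincide; this yields $(1)\Leftrightarrow(2)$.

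Next I would establish $(2)\Leftrightarrow(5)$ by a pointwise computation on the logarithmic stratification recalled above. For $\delta\in\Theta_X$ and $x$ in a small neighbourhood $U$ of $0$ one has $\delta(f)(x)=df_x(\delta(x))$, so $x\in V(J_X(f))$ precisely when the differential $df_x$ annihilates the vector subspace $\Theta_X(x)\subseteq\C^n$. By the defining property of the stratification, if $x$ lies in the stratum $X_\alpha$ then $\Theta_X(x)=T_xX_\alpha$, so membership $x\in V(J_X(f))$ says exactly that $x$ is a critical point of the restriction of $f$ to $X_\alpha$. Hence $V(J_X(f))\subseteq\{0\}$ holds if and only if $f$ restricted to each logarithmic stratum $X_\alpha$ is a submersion away from $0$, which is condition $(5)$; note that here the relevant strata range over the whole ambient stratification of $U$, of which the strata of $X$ and the open stratum $U\setminus X$ are both part.

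It remains to treat $(1)\Leftrightarrow(4)\Leftrightarrow(3)$ within the theory of the group $\mathcal R(X)$ of germs of diffeomorphisms of $(\C^n,0)$ preserving $X$, acting on $\O_n$ by composition. The key computation is that the infinitesimal generators of $\mathcal R(X)$ are exactly the elements of $\Theta_X$, so the tangent space to the orbit of $f$ under the extended group $\mathcal R_e(X)$ equals $J_X(f)$; thus the $\mathcal R_e(X)$-codimension of $f$ is finite precisely when $\mu_X(f)=\dim_\C\O_n/J_X(f)$ is finite. Granting that $\mathcal R(X)$ is a geometric subgroup in the sense of Damon, the general finite determinacy theorem gives $(1)\Leftrightarrow(4)$, while the infinitesimal criterion for versality, together with the construction of a versal unfolding whose parameters run over a $\C$-basis of $\O_n/J_X(f)$, gives $(4)\Leftrightarrow(3)$. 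I expect the main obstacle to be the implication ``finite codimension $\Rightarrow$ finite determinacy'': showing that $\mu_X(f)<\infty$ forces $f$ to be finitely $\mathcal R(X)$-determined requires the homotopy (Thom--Levine/Mather) method of integrating a suitable family of logarithmic vector fields, and it is precisely at this point that one must verify that $\mathcal R(X)$ fits the abstract framework guaranteeing that finite codimension implies finite determinacy; the converse implication and the passage to versality are comparatively formal.
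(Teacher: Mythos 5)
First, a point of reference: the paper does not prove Theorem~\ref{nuXfinita} at all --- it is recalled verbatim from \cite[p.\,64]{BR} --- so there is no internal proof to compare yours against; your outline in fact follows the same broad lines as the original source (Nullstellensatz, the tangency property of the logarithmic strata, and the infinitesimal theory of $\mathcal R(X)$). Your blocks $(1)\Leftrightarrow(2)$ and $(2)\Leftrightarrow(5)$ are correct and essentially complete. In particular you are right that in $(5)$ the strata must range over the logarithmic stratification of the ambient neighbourhood $U$, including the strata lying in the complement of $X$: for $X=\{0\}\subset\C^n$, $n\geq 2$, and $f=x_1^2$, the condition over the strata contained in $X$ is vacuous while $\mu_X(f)=\infty$. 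One side remark is wrong, though harmless: it is not true that $0\in V(J_X(f))$ in general, since if $\Theta_X(0)\neq 0$ and $df_0$ does not annihilate $\Theta_X(0)$, then some $\delta(f)$ is a unit, $J_X(f)=\O_n$ and $V(J_X(f))=\emptyset$; the equivalence $(1)\Leftrightarrow(2)$ survives because in that degenerate case both conditions hold trivially, but the claim that ``$V(J_X(f))\subseteq\{0\}$ and $V(J_X(f))=\{0\}$ coincide'' should be dropped.

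The genuine gap is the deformation-theoretic block $(1)\Leftrightarrow(4)\Leftrightarrow(3)$, which you do not prove but reduce to the hypothesis that $\mathcal R(X)$ is a geometric subgroup in Damon's sense. That hypothesis is exactly where the content of the theorem lies. The identification $T\mathcal R_e(X)\cdot f=J_X(f)$ is the easy computation; what has to be established is that finite codimension forces finite determinacy and the existence of a versal unfolding, and for this one needs the structural facts that make the Thom--Levine/Mather homotopy method run \emph{inside} the group $\mathcal R(X)$: coherence of $\Theta_X$ and, above all, that a (parametrized family of) vector field(s) tangent to $X$ integrates to a (family of) diffeomorphism(s) preserving $X$, so that the trivializations produced by the homotopy argument actually lie in $\mathcal R(X)$. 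This integration lemma, together with the verification of Damon's axioms for $\mathcal R(X)$, is precisely what \cite{BR} (and \cite{DamonMemoirsAMS}) carry out, and your proposal names it as ``the main obstacle'' without supplying it. So, as written, the proposal proves $(1)\Leftrightarrow(2)\Leftrightarrow(5)$ and correctly locates, but does not close, the remaining equivalences; it is a sound plan rather than a proof of the full statement.
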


\begin{ex}
Let $X=\{(x,y,z)\in\C^3: xyz=0\}$ and let $f\in \O_3$ be given by $f(x,y,z)=xy+xz+yz$, for all $(x,y,z)\in\C^3$.
We observe that $\Theta_X=\langle (x,0,0),(0,y,0),(0,0,z)\rangle$. Therefore $J_X(f)=\langle xy+xz, xy+yz, xz+yz\rangle$.
In particular $\mu_X(f)$ is not finite, whereas $f$ has an isolated singularity at the origin.
\end{ex}

If $X$ is an analytic subvariety of $(\C^n,0)$, then we say that {\it $X$ supports a germ with an isolated critical point}
when there exist a germ $f\in\O_n$ such that $\mu_X(f)<\infty$. In this case we also say that {\it $f$ has an isolated singularity
on $X$ at $0$}. As shown in \cite[Theorem 3.3]{BR}, if $U$ is a sufficiently small
neighbourhood of $0\in\C^n$, then the germ $(X,x)$ supports a germ with an isolated critical point for each $x\in X\cap U$ if and only if $(X,0)$
is holonomic.

We recall that a germ of hypersurface $X\subseteq \C^n$ is said to be a {\it free divisor} when $\Theta_X$
is a free $\O_n$-submodule of $\O_n^n$ (see \cite{DamonMemoirsAMS, Saito}). In this case, necessarily $\Theta_X$ is generated by $n$ elements.

Let $g=(g_1,\dots, g_p):(\C^n,0)\to (\C^p,0)$ be an analytic map
germ. If $p\leq n$, then we denote by $\mathbf J(g_1,\dots,g_p)$ the ideal of $\O_n$ generated by the minors of order $p$ of
the Jacobian matrix of $g$. We recall that the map $g$, or the set $g^{-1}(0)$, is said to be an {\it isolated complete intersection
singularity} (or an \textsc{icis}, for short) when $p\leq n$, $\dim \V(g_1,\dots, g_p)=n-p$ and
the ideal $\langle g_1,\dots, g_p
\rangle+\J(g_1,\dots, g_p)$ has finite colength in $\O_n$. As recalled in Theorem \ref{Xicisqh}, an explicit generating system for $\Theta_X$ is known when
$X=g^{-1}(0)$, being $g:(\C^n,0)\to (\C^p,0)$ a weighted homogeneous \textsc{icis}.

If $g:(\C^n,0)\to (\C^p,0)$ is an \textsc{icis}, then we denote by $\mu(g)$ the Milnor number of $g$ (see \cite{Greuel,Le, Looijenga}).
We recall that, when $p=n$, then
\begin{equation}\label{casequidim}
\mu(g)=\dim_\C\frac{\O_n}{\langle g_1,\dots, g_n\rangle}-1
\end{equation}
(see for instance \cite[p.\,78]{Looijenga}).

Given a vector of weights $w=(w_1,\dots, w_n)\in\Z^n_{\geq 1}$, if coordinates $x_1,\dots, x_n$ in $\C^n$ are fixed,
then we define the {\it Euler vector field associated
to $w$} as $\theta_w=w_1x_1\frac{\partial}{\partial x_1}+\cdots +w_nx_n\frac{\partial}{\partial x_n}$.

As pointed out in \cite[p.\,316]{HM2}, the following result is due to Aleksandrov and Kersken
(see also \cite[p.\,467]{Alek}, \cite[p.\,79, Proposition 7.2]{BR}, \cite[p.\,617]{Wahl}).

\begin{thm}\label{Xicisqh}
Let $w\in\Z^n_{\geq 1}$ and let
$h=(h_1,\dots, h_p):(\C^n,0)\to (\C^p,0)$ be a weighted homogeneous \textsc{icis}
with respect to $w$, $n-p\geq 1$. Let $X=h^{-1}(0)$. Then $\Theta_X$ is generated by
$\{\theta_w,h_i\frac{\partial}{\partial x_j}: i=1,\dots, p,\,j=1,\dots, n\}$ and the derivations coming from
\begin{equation}\label{Ipmes1}
\I_{p+1}\left[
            \begin{array}{ccc}
              \frac{\partial}{\partial x_1} & \cdots & \frac{\partial}{\partial x_n} \\
              \frac{\partial h_1}{\partial x_1} & \cdots & \frac{\partial h_1}{\partial x_n} \\
              \vdots & \, & \vdots \\
              \frac{\partial h_p}{\partial x_1} & \cdots & \frac{\partial h_p}{\partial x_n} \\
            \end{array}
          \right].
\end{equation}
In particular, given any function $f\in\O_n$, we have
\begin{equation}\label{JXf}
J_X(f)=\langle \theta_w(f)\rangle + \langle h_1,\dots, h_p\rangle J(f)+\J(f,h_1,\dots,h_p).
\end{equation}
\end{thm}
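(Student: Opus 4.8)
The plan is to reduce the statement to a computation of the tangent module $\Der(\O_X)$ of the \icis{} and then to account for the three families of generators one at a time. Throughout, set $I=\langle h_1,\dots,h_p\rangle$ and $\O_X=\O_n/I$. Since $h$ is an \icis{}, the ideal $I$ is reduced, so $I=I(X)$ and $\Theta_X=\{\delta\in\O_n^n:\delta(h_i)\in I,\ i=1,\dots,p\}$. Let $\psi\colon\O_n^n\to\O_n^p$ be the map $\delta\mapsto(\delta(h_1),\dots,\delta(h_p))$ given by the Jacobian matrix of $h$, and let $\bar\psi\colon\O_X^n\to\O_X^p$ be its reduction modulo $I$. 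Then $\Theta_X=\psi^{-1}(I^p)$ is the preimage of $\ker\bar\psi$ under the reduction $\O_n^n\to\O_X^n$, which yields the short exact sequence
\begin{equation*}
0\longrightarrow I\O_n^n\longrightarrow\Theta_X\longrightarrow\ker\bar\psi\longrightarrow 0 .
\end{equation*}
Here $I\O_n^n$ is generated exactly by the trivial fields $h_i\frac{\partial}{\partial x_j}$, and the conormal sequence of the complete intersection $X$ identifies $\ker\bar\psi$ with $\Der(\O_X)$. Thus the theorem becomes the statement that $\Der(\O_X)$ is generated, as an $\O_X$-module, by the classes of $\theta_w$ and of the derivations attached to the $(p+1)$-minors in (\ref{Ipmes1}); both lie in $\ker\bar\psi$, the minors because the associated $(p+1)\times(p+1)$ determinant has a repeated row, and $\theta_w$ because weighted homogeneity gives $\theta_w(h_i)=d_ih_i\in I$ with $d_i=\deg_w h_i$.

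Next I would treat the punctured germ. Away from the origin $X$ is smooth, so $\overline{Dh}$ has rank $p$ and $\Der(\O_X)$ is locally free of rank $n-p$ there. Where a fixed maximal minor of $Dh$ is a unit, Cramer's rule shows that the corresponding $(p+1)$-minor derivations already form a local basis of $\Der(\O_X)$. Hence the submodule $M$ generated by the minor derivations agrees with $\Der(\O_X)$ on $X\setminus\{0\}$, and the quotient $\Der(\O_X)/M$ has finite length; in particular $\theta_w$ is only needed at the origin.

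Over $\O_n$ the situation is clean: the $(p+1)$-minor derivations are the Buchsbaum–Rim relations attached to $\psi$ (for $p=1$ the Koszul syzygies of the partials $\frac{\partial h}{\partial x_j}$), and they generate $\ker\psi$ provided the maximal-minor ideal $\J(h_1,\dots,h_p)=I_p(\psi)$ has grade $\ge n-p+1$. This grade bound holds because for an \icis{} the critical locus $V(\J(h_1,\dots,h_p))$ has dimension $\le p-1$: its image under $h$ is the discriminant, a proper analytic subset of $(\C^p,0)$, and $h$ restricted to the critical locus has finite fibres, whence $\codim\J(h_1,\dots,h_p)\ge n-p+1$. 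The hard part will be the passage from $\O_n$ to $\O_X$: reducing the resolution of $\operatorname{coker}\psi$ modulo $I$ creates new syzygies, and $\Der(\O_X)/M$ is governed by $\operatorname{Tor}_1^{\O_n}(\operatorname{coker}\psi,\O_X)$. I expect this to be the main obstacle, and it is exactly where quasihomogeneity is essential: the Euler relations $\theta_w(h_i)=d_ih_i$ produce, upon reduction modulo $I$, precisely one new relation among the columns of $\overline{Dh}$ for each $i$, and all of them are realized by the single field $\theta_w$. Making this precise—that $[\theta_w]$ generates the finite-length quotient $\Der(\O_X)/M$—is the content of the Aleksandrov–Kersken computation; I would establish it either by comparing the Buchsbaum–Rim resolution over $\O_n$ with its reduction over $\O_X$ and tracking the connecting homomorphism, or, since everything is graded, by a Hilbert-series count in the graded category.

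Finally, formula (\ref{JXf}) follows at once from the generating set, since $J_X(f)$ is generated by the elements $\delta(f)$ as $\delta$ runs over the generators. The Euler field contributes $\langle\theta_w(f)\rangle$; the trivial fields contribute the products $h_i\frac{\partial f}{\partial x_j}$, that is $\langle h_1,\dots,h_p\rangle J(f)$; and the derivation attached to columns $j_0<\cdots<j_p$ sends $f$ to the cofactor expansion along the top row of the $(p+1)\times(p+1)$ minor on those columns of the Jacobian matrix of $(f,h_1,\dots,h_p)$, so these contributions generate $\J(f,h_1,\dots,h_p)$. Adding the three pieces gives $J_X(f)=\langle\theta_w(f)\rangle+\langle h_1,\dots,h_p\rangle J(f)+\J(f,h_1,\dots,h_p)$.
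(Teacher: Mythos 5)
Your reductions are correct as far as they go: reducedness of $I=\langle h_1,\dots,h_p\rangle$ gives the exact sequence $0\to I\O_n^n\to\Theta_X\to\ker\bar\psi\to 0$; the identification $\ker\bar\psi\cong\Der(\O_X)$ via the conormal sequence is legitimate; the trivial fields $h_i\frac{\partial}{\partial x_j}$ do generate $I\O_n^n$; the bound $\grade \J(h_1,\dots,h_p)\geq n-p+1$ (your discriminant argument is the standard one) makes the Buchsbaum--Rim complex exact over $\O_n$, so the minor fields generate $\ker\psi$; the smooth-locus analysis shows that the submodule $M$ they generate agrees with $\Der(\O_X)$ off the origin; and the quotient $\ker\bar\psi/M$ is indeed $\operatorname{Tor}_1^{\O_n}(\operatorname{coker}\psi,\O_X)$, a module of finite length. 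The final paragraph deducing \eqref{JXf} from the generating set is also fine. But the argument stops exactly where the theorem begins: you never prove that the class of $\theta_w$ generates this finite-length quotient. Writing that this step ``is the content of the Aleksandrov--Kersken computation'' is circular, since that computation \emph{is} the statement being proved, and the two substitutes you offer (tracking a connecting homomorphism between the Buchsbaum--Rim resolution and its reduction modulo $I$, or a Hilbert-series count in the graded category) are named but not carried out even in outline.

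This gap is not a routine verification; it is the entire content of the theorem. Every step you complete is valid for an arbitrary \icis{}: weighted homogeneity enters your argument only through the one-line observation that $\theta_w(h_i)=d_ih_i$, i.e.\ that $\theta_w\in\Theta_X$. Yet the conclusion fails without the grading. Already for $p=1$ one checks that $\Theta_X/(M+I\O_n^n)\cong (J(h):h)/J(h)$, and this quotient is generated by the class of a unit --- which is exactly what the Euler field supplies, since $\theta_w(h)=dh$ --- if and only if $h\in J(h)$, i.e.\ if and only if $h$ is quasihomogeneous in suitable coordinates, by Saito \cite{Saito}. So a correct proof must make the grading do real work at precisely the step you skipped, and nothing in your sketch indicates how. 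For context: the paper does not prove Theorem \ref{Xicisqh} either; it quotes it from the literature, attributing it (following \cite{HM2}) to Aleksandrov and Kersken, with \cite{Alek}, \cite[Proposition 7.2]{BR} and \cite{Wahl} as sources, and Bruce--Roberts' proof runs along homological lines close to your skeleton. As submitted, however, your text is a plan whose central step is missing, not a proof.
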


We recall that, whenever $h:(\C^n,0)\to (\C^p,0)$ is an \textsc{icis} with $n-p\geq 1$, then the ideal $\langle h_1,\dots, h_p\rangle$ is reduced
(see \cite[p.\,7]{Looijenga}).

The case $p=1$ of Theorem \ref{Xicisqh} leads to a substantial simplification of $\Theta_X$, as can be seen in \cite[Proposition 1.2]{Wahl}.
We recall this case in the following theorem (see also \cite[p.\,249]{HM} or \cite[Theorem 2.3]{NOT2011}).

\begin{thm}\label{Derloghom}
Let $w\in\Z^n_{\geq 1}$ and let
$h\in\O_n$ such that $h$ is weighted homogeneous with respect to $w$
and $h$ has an isolated singularity at the origin, $n\geq 2$. Let $X=h^{-1}(0)$. Then
$\Theta_X$ is generated by $\theta_w$ and the derivations
$\theta_{ij}=\frac{\partial h}{\partial x_j}\frac{\partial }{\partial x_i}-\frac{\partial h}{\partial x_i}\frac{\partial}{\partial x_j}$,
for $1\leq i<j\leq n$. Hence, for all $f\in\O_n$, we have
$$
J_X(f)=\langle \theta_w(f)\rangle+ \J(f,h),
$$
for all $f\in\O_n$.
\end{thm}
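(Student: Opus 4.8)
The plan is to deduce the statement from Theorem \ref{Xicisqh} applied in the case $p=1$. First I would record that when $p=1$ the matrix (\ref{Ipmes1}) has only two rows, so its order-$2$ minors are exactly the expressions $\frac{\partial h}{\partial x_j}\frac{\partial}{\partial x_i}-\frac{\partial h}{\partial x_i}\frac{\partial}{\partial x_j}=\theta_{ij}$, for $1\leq i<j\leq n$. Hence Theorem \ref{Xicisqh} already tells us that $\Theta_X$ is generated by $\theta_w$, the derivations $h\frac{\partial}{\partial x_j}$ with $j=1,\dots,n$, and the $\theta_{ij}$ with $1\leq i<j\leq n$. The entire content of the present statement is therefore that the generators $h\frac{\partial}{\partial x_j}$ are superfluous, so the work reduces to expressing each of them as an $\O_n$-combination of $\theta_w$ and the $\theta_{ij}$.

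To establish this redundancy, the key computation is the following. Writing $d$ for the weighted degree of $h$ and invoking the Euler relation $\theta_w(h)=\sum_j w_jx_j\frac{\partial h}{\partial x_j}=dh$, I would compute, for each fixed $k$ (with the conventions $\theta_{kj}:=-\theta_{jk}$ when $j<k$ and $\theta_{kk}:=0$),
\[
\sum_{j=1}^n w_jx_j\,\theta_{kj}=\Big(\sum_{j=1}^n w_jx_j\tfrac{\partial h}{\partial x_j}\Big)\tfrac{\partial}{\partial x_k}-\tfrac{\partial h}{\partial x_k}\sum_{j=1}^n w_jx_j\tfrac{\partial}{\partial x_j}=dh\,\tfrac{\partial}{\partial x_k}-\tfrac{\partial h}{\partial x_k}\,\theta_w.
\]
Since $h$ vanishes at $0$ and the weights are strictly positive, $d$ is a positive integer and hence invertible in $\C$; rearranging gives
\[
h\,\tfrac{\partial}{\partial x_k}=\tfrac{1}{d}\Big(\sum_{j=1}^n w_jx_j\,\theta_{kj}+\tfrac{\partial h}{\partial x_k}\,\theta_w\Big),
\]
which exhibits each $h\frac{\partial}{\partial x_k}$ inside the $\O_n$-submodule generated by $\theta_w$ and the $\theta_{ij}$. (The hypothesis $n\geq2$ guarantees that at least one pair $i<j$ exists, so this submodule is nonempty of $\theta_{ij}$'s.) This proves that $\Theta_X$ is generated by $\theta_w$ together with $\{\theta_{ij}\}_{1\leq i<j\leq n}$.

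For the formula for $J_X(f)$, I would use that $J_X(f)$ is generated by $\delta(f)$ as $\delta$ runs over any generating set of $\Theta_X$, whence $J_X(f)=\langle\theta_w(f)\rangle+\langle\theta_{ij}(f):1\leq i<j\leq n\rangle$. Since $\theta_{ij}(f)=\frac{\partial h}{\partial x_j}\frac{\partial f}{\partial x_i}-\frac{\partial h}{\partial x_i}\frac{\partial f}{\partial x_j}$ is precisely the order-$2$ minor of the Jacobian matrix of $(f,h)$ built from columns $i$ and $j$, the ideal $\langle\theta_{ij}(f):i<j\rangle$ coincides with $\J(f,h)$, giving $J_X(f)=\langle\theta_w(f)\rangle+\J(f,h)$. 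The only genuinely substantive step—and the one I expect to be the crux—is the redundancy computation above, namely seeing that the Euler identity is exactly what lets one absorb every $h\frac{\partial}{\partial x_j}$ into the span of $\theta_w$ and the $\theta_{ij}$; once Theorem \ref{Xicisqh} is granted, everything else is formal.
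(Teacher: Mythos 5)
Your proof is correct and follows exactly the route the paper indicates: the paper states this theorem without proof, recalling it as the case $p=1$ of Theorem \ref{Xicisqh} and citing Wahl, Hauser--M\"uller and Nu\~no-Ballesteros--Or\'efice--Tomazella for the ``substantial simplification'' of $\Theta_X$. Your Euler-relation computation $h\frac{\partial}{\partial x_k}=\frac{1}{d}\bigl(\sum_{j} w_jx_j\,\theta_{kj}+\frac{\partial h}{\partial x_k}\,\theta_w\bigr)$, showing that the generators $h\frac{\partial}{\partial x_j}$ of Theorem \ref{Xicisqh} are redundant, is precisely the step the paper leaves to those references, so your argument is a correct and self-contained version of the intended proof.
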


\begin{rem}
Let us observe that, even if $X$ is a homogeneous \textsc{icis}, a simplification of $\Theta_X$ as in Theorem \ref{Derloghom} is not possible in general.
For instance,
let $h:(\C^3,0)\to (\C^2,0)$ be the map given by $h(x,y,z)=(x^2+y^2+z^2, xyz)$, for all $(x,y,z)\in\C^3$, and let $h^{-1}(0)$. Then,
using {\it Singular} \cite{Singular} and Lemma \ref{syzyg}, it is easy to check that the eight generators
of $\Theta_X$ given by Theorem \ref{Xicisqh} consitute a minimal generating set of $\Theta_X$.
\end{rem}



Given an analytic map germ $h=(h_1,\dots, h_p):(\C^n,0)\to (\C^p,0)$ and a function $f\in\O_n$, let us define
$$
c(f,h)=\dim_\C\frac{\O_n}{\langle h_1,\dots, h_p\rangle+ \J(f,h_1,\dots, h_p)}.
$$
Let us recall that, by \cite[Theorem 3.7.1]{Le}, if the maps $(h_1,\dots, h_p)$ and $(h_1,\dots, h_p,f)$ are \textsc{icis}, then
$c(f,h)<\infty$ and $\mu(h_1,\dots, h_p)+\mu(h_1,\dots, h_p,f)=c(f,h)$.

\begin{prop}\label{cfh}
Let $h=(h_1,\dots, h_p):(\C^n,0)\to (\C^p,0)$ be an \textsc{icis}, where $p\leq n-1$, and let $f\in\O_n$.
Let $X=h^{-1}(0)$. If $\mu_X(f)<\infty$, then $c(f,h)<\infty$.
\end{prop}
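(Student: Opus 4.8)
The plan is to reduce both finiteness conditions to statements about zero sets and to prove the inclusion of analytic germs $V(K)\subseteq V(J_X(f))$, where $K=\langle h_1,\dots,h_p\rangle+\J(f,h_1,\dots,h_p)$. Since $\mu_X(f)<\infty$, Theorem \ref{nuXfinita} gives $V(J_X(f))\subseteq\{0\}$, and the desired inclusion then forces $V(K)\subseteq\{0\}$; as a standard fact for ideals of $\O_n$, the quotient $\O_n/K$ is finite-dimensional exactly when $V(K)\subseteq\{0\}$, so this yields $c(f,h)<\infty$. Concretely, I would fix a small representative neighbourhood $U$ of $0$ on which all the germs above are realized, take a point $x\in U\setminus\{0\}$ lying in $V(K)$, and show that $x\in V(J_X(f))$; this gives $V(K)\setminus\{0\}\subseteq V(J_X(f))$, and since $0\in V(J_X(f))$ trivially, the inclusion $V(K)\subseteq V(J_X(f))$ follows.

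First I would record the local geometry of $X$ near such a point. Because $h$ is an \textsc{icis} with $n-p\geq 1$, the ideal $\langle h_1,\dots,h_p\rangle$ is reduced, so $I(X)=\langle h_1,\dots,h_p\rangle$. Moreover, the \textsc{icis} condition states that $\langle h_1,\dots,h_p\rangle+\J(h_1,\dots,h_p)$ has finite colength, which means that the locus where the Jacobian matrix $dh_x=(\partial h_i/\partial x_j)(x)$ drops rank meets $X$ only at $0$; hence $\rank dh_x=p$ for every $x\in(X\cap U)\setminus\{0\}$, the germ $X$ is smooth there, and $T_xX=\ker dh_x$ has dimension $n-p$. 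Since $K\supseteq\langle h_1,\dots,h_p\rangle$, any $x\in V(K)$ satisfies $h(x)=0$, so such an $x\neq 0$ is a smooth point of $X$ to which this description applies.

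The core of the argument is then two linear-algebra observations at $x$. On one hand, the vanishing at $x$ of all $(p+1)\times(p+1)$ minors of the Jacobian matrix of $(f,h_1,\dots,h_p)$ means that this matrix has rank $\leq p$; combined with $\rank dh_x=p$, this forces the row $df_x$ to lie in the linear span of $dh_{1,x},\dots,dh_{p,x}$, and therefore $df_x$ vanishes on $\ker dh_x=T_xX$. On the other hand, for any $\delta\in\Theta_X$ we have $\delta(h_i)\in I(X)=\langle h_1,\dots,h_p\rangle$, so $\delta(h_i)(x)=dh_{i,x}(\delta(x))=0$ for all $i$, i.e.\ $\delta(x)\in\ker dh_x=T_xX$. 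Combining the two facts, $\delta(f)(x)=df_x(\delta(x))=0$ for every $\delta\in\Theta_X$, whence $x\in V(J_X(f))$, as required.

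The step requiring the most care is the passage between the algebraic finiteness of the colengths and the set-theoretic conditions $V(\cdot)\subseteq\{0\}$, and the fact that the weighted homogeneous description of $\Theta_X$ from Theorem \ref{Xicisqh} is \emph{not} available here, so the conclusion $\delta(f)(x)=0$ must be obtained purely from the defining property $\delta(I(X))\subseteq I(X)$ rather than from an explicit generating set. The rank computation pinning $df_x$ to the span of the $dh_{i,x}$ is valid precisely because the \textsc{icis} hypothesis guarantees $\rank dh_x=p$ off the origin; without that full-rank control the minors of $(f,h_1,\dots,h_p)$ would not constrain the tangential behaviour of $f$ along $X$, and the inclusion $V(K)\subseteq V(J_X(f))$ could fail.
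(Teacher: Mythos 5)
Your proof is correct, and it shares with the paper's argument the same linear-algebraic core: at a point $x\in V(K)\setminus\{0\}$ the \textsc{icis} hypothesis forces $\rank dh_x=p$, and the vanishing of the $(p+1)\times(p+1)$ minors of $D(f,h_1,\dots,h_p)$ at $x$ then pins $df_x$ to the span of $dh_{1,x},\dots,dh_{p,x}$, so that $df_x$ annihilates $\ker dh_x$. Where you genuinely diverge is in how this is played against $\mu_X(f)<\infty$. The paper invokes item (5) of Theorem \ref{nuXfinita}: it passes to the logarithmic stratum $Y$ through $x$, uses the property that the tangent space of the stratum is $\Theta_X(x)$ to produce $\xi\in\Theta_{X,x}$ with $(Df)_x(\xi(x))\neq 0$, and gets a contradiction because $Y\subseteq V(h_1,\dots,h_p)$ forces $(Df)_x(\xi(x))=0$. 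You instead use item (2), $V(J_X(f))\subseteq\{0\}$: evaluating an arbitrary $\delta\in\Theta_X$ at $x$, you note $\delta(x)\in\ker dh_x$ because $\delta(h_i)\in I(X)$ vanishes at $x\in X$, conclude $\delta(f)(x)=0$ for every $\delta$, hence $x\in V(J_X(f))$, which is impossible. Your route is more self-contained: it bypasses the logarithmic stratification and its nontrivial properties entirely, needing only the definition of $\Theta_X$, evaluation at smooth points of $X$, and the Nullstellensatz equivalence between finite colength and $V(\cdot)\subseteq\{0\}$ (a fact the paper also uses tacitly at the end of its proof); the paper's route, in exchange, makes the stratified geometry visible. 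One cosmetic slip: your claim that $0\in V(J_X(f))$ holds ``trivially'' can fail (if $J_X(f)=\O_n$, i.e.\ $\mu_X(f)=0$, then $V(J_X(f))=\emptyset$), but this is harmless, since what your argument actually establishes and needs is $V(K)\setminus\{0\}\subseteq V(J_X(f))\subseteq\{0\}$, which already gives $V(K)\subseteq\{0\}$ and hence $c(f,h)<\infty$.
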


\begin{proof}
Let $I=\langle h_1,\dots, h_p\rangle+\J(f, h_1,\dots, h_p)$ and let us suppose that $\dim V(I)\geq 1$.
Let us fix a point $x\in V(I)$, $x\neq 0$. In particular $x\in V(h_1,\dots, h_p)$.
Since $h$ is an \textsc{icis}, we can assume that not all the $p\times p$ minors of the differential matrix $Dh$ vanish at $x$.
Moreover, the condition $x\in V(I)$ also implies that all $(p+1)\times (p+1)$ minors of $D(f,h)$ vanish at $x$. In particular
$\nabla f(x)$ is a linear combination of $\nabla h_1(x),\dots,\nabla h_p(x)$.

As indicated in Theorem \ref{nuXfinita}, the condition $\mu_X(f)<\infty$ implies that
the restriction of $f$ to each logarithmic stratum of $X$ is a submersion except possibly at $0$.
Let $Y$ denote the logarithmic stratum of $X$ such that $x\in Y$.
Hence, there exists some non-zero $\xi\in \Theta_{X,x}$
such that $\xi(x)$ belongs to $T_xY$ and $D(f\vert_Y)_x(\xi(x))=(Df)_x(\xi(x))\neq 0$. However, since $\nabla f(x)$ is a linear combination of $\nabla h_1(x),\dots, \nabla h_p(x)$
and $Y\subseteq V(h_1,\dots, h_p)$, it follows that $(Df)_x(\xi(x))=\nabla f(x) \cdot \xi(x)=0$, which is a contradiction. Therefore $\dim V(I)=0$,
that is, $c(f,h)<\infty$.
\end{proof}

Under the conditions of the previous result, the map $(h_1,\dots, h_p,f)$ is also an \textsc{icis} and $\mu(h_1,\dots, h_p)+\mu(h_1,\dots, h_p,f)=c(f,h)$,
by the Lê-Greuel- formula.

\begin{thm}\label{milnorfh}\cite[Proposition\,7.7,\,p.\,82]{BR}
Let $w\in\Z^n_{\geq 1}$ and let
$h=(h_1,\dots, h_p):(\C^n,0)\to (\C^p,0)$ be a weighted homogeneous \textsc{icis}
with respect to $w$, $n-p\geq 1$.
Let $f\in\O_n$ such that $\mu_X(f)<\infty$.
Then the map $(f,h_1,\dots, h_p)$ is also an \textsc{icis} and its Milnor number is given by
\begin{equation}\label{qraro}
\mu(f,h_1,\dots, h_p)=\dim_\C\frac{\O_n}{\langle \theta_w(f),h_1,\dots, h_p\rangle+\J(f,h_1,\dots, h_p)}.
\end{equation}
\end{thm}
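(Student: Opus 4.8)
The plan is to reduce the statement to a single colength comparison and then settle that comparison with an Euler-relation determinant identity. Throughout write $\langle h\rangle=\langle h_1,\dots,h_p\rangle$ and abbreviate $\J(h)=\J(h_1,\dots,h_p)$, $\J(f,h)=\J(f,h_1,\dots,h_p)$. First I would note that the hypotheses already supply everything needed to run a L\^e--Greuel computation: by Proposition \ref{cfh} the number $c(f,h)=\dim_\C\O_n/(\langle h\rangle+\J(f,h))$ is finite, and by the remark following it the map $(f,h_1,\dots,h_p)$ is an \icis\ with $\mu(h_1,\dots,h_p)+\mu(f,h_1,\dots,h_p)=c(f,h)$. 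Setting $A=\langle h\rangle+\J(f,h)$ and $B=A+\langle\theta_w(f)\rangle$, so that $\dim_\C\O_n/A=c(f,h)$ and $\dim_\C\O_n/B$ is the right-hand side of (\ref{qraro}), the identity $\dim_\C\O_n/B=\dim_\C\O_n/A-\dim_\C B/A$ turns the theorem into the single assertion $\dim_\C B/A=\mu(h_1,\dots,h_p)$.

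The main device is the $\O_n$-linear map $\phi\colon\O_n/(\langle h\rangle+\J(h))\to\O_n/A$ given by multiplication by $\theta_w(f)$, whose image is exactly $B/A$. To check that $\phi$ is well defined I would prove $\theta_w(f)\cdot(\langle h\rangle+\J(h))\subseteq A$; the part concerning $\langle h\rangle$ is trivial, and the part concerning the $p\times p$ minors $\Delta$ of the Jacobian of $h$ is the heart of the computation. For a fixed choice of $p$ columns, form the $(p+1)\times(p+1)$ matrix $N$ whose first column is $(\theta_w(f),d_1h_1,\dots,d_ph_p)^{\mathsf T}$, where $d_i$ is the weighted degree of $h_i$; by the Euler relations $\theta_w(h_i)=d_ih_i$ this column equals $\sum_j w_jx_j$ times the $j$-th column of the Jacobian of $(f,h)$. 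Taking the remaining columns of $N$ to be the chosen columns of that Jacobian, multilinear expansion of $\det N$ in the first column shows $\det N\in\J(f,h)$, while Laplace expansion along the first column gives $\det N\equiv\theta_w(f)\,\Delta\pmod{\langle h\rangle}$. Hence $\theta_w(f)\,\Delta\in\J(f,h)+\langle h\rangle=A$, as needed. Since $\phi$ surjects onto $B/A$, this already yields $\dim_\C B/A\le\dim_\C\O_n/(\langle h\rangle+\J(h))$. The same scheme, run inductively on $p$ with the weighted homogeneous coordinate $h_p$ playing the role of $f$, identifies $\dim_\C\O_n/(\langle h\rangle+\J(h))$ with $\mu(h_1,\dots,h_p)$ (this is the weighted homogeneous instance of $\mu=\tau$), so the whole theorem and this auxiliary identity both come down to a single statement: the injectivity of $\phi$.

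That injectivity, equivalently the inclusion $(A:\theta_w(f))\subseteq\langle h\rangle+\J(h)$, is the point I expect to be the main obstacle, since it is exactly what promotes the inequality above to the desired equality $\dim_\C B/A=\mu(h_1,\dots,h_p)$. Here I would pass to the reduced ring $\O_X=\O_n/\langle h\rangle$ (reduced because $h$ is an \icis\ with $n-p\ge 1$) and exploit that $X\setminus\{0\}$ is smooth: there the $p\times p$ minors generating $\J(h)$ do not all vanish, so $\J(h)\,\O_X$ is locally the unit ideal and the cokernel $\O_X/\J(h)\,\O_X$ is concentrated at the origin. The delicate part is controlling the infinitesimal behaviour at $0$, that is, ruling out elements annihilated by $\theta_w(f)$ modulo $A$ that do not already lie in $\langle h\rangle+\J(h)$; I would handle this by showing $\theta_w(f)$ is a non-zerodivisor on $\O_X/\J(h)\,\O_X$ together with a conservation-of-number argument for the finite-colength ideals $A$ and $\langle h\rangle+\J(h)$, using that $\mu_X(f)<\infty$ forces the critical locus of $f$ on $X$ to reduce to $\{0\}$ (Theorem \ref{nuXfinita}). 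Once $\phi$ is shown to be injective, $\dim_\C B/A=\dim_\C\O_n/(\langle h\rangle+\J(h))=\mu(h_1,\dots,h_p)$, and formula (\ref{qraro}) follows.
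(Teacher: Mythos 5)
Your reduction is correct and, up to the final step, cleanly executed: by Proposition \ref{cfh} and the remark following it, the theorem is equivalent to the single equality $\dim_\C B/A=\mu(h_1,\dots,h_p)$, where $A=\langle h_1,\dots,h_p\rangle+\J(f,h_1,\dots,h_p)$ and $B=A+\langle\theta_w(f)\rangle$; your Euler-relation determinant identity does prove $\theta_w(f)\,\J(h_1,\dots,h_p)\subseteq A$, so $\phi$ is well defined and surjects onto $B/A$; and quoting Greuel's $\mu=\tau$ for weighted homogeneous \textsc{icis} (\cite[Korollar 5.8]{Greuel}) then yields $\dim_\C B/A\leq\mu(h_1,\dots,h_p)$, i.e.\ one inequality in \eqref{qraro}. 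The genuine gap is the other inequality, the injectivity of $\phi$, and the route you propose for it cannot work. The module $\O_n/\big(\langle h_1,\dots,h_p\rangle+\J(h_1,\dots,h_p)\big)$ has finite length (this is part of the definition of an \textsc{icis}) and is nonzero in every nontrivial case, namely whenever $X$ is singular at $0$; since $\theta_w(f)\in\m_n$, multiplication by $\theta_w(f)$ on this module is nilpotent (because $\m_n^N$ kills any finite-length module), hence has nonzero kernel. So ``$\theta_w(f)$ is a non-zerodivisor on $\O_X/\J(h)\,\O_X$'' is simply false except when that module vanishes, and the observation that the module is concentrated at the origin does not help: all of its length sits exactly at the point where your localization argument says nothing. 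The appeal to a ``conservation-of-number argument'' is not an argument; note also that your inductive justification of $\dim_\C\O_n/(\langle h\rangle+\J(h))=\mu(h)$ ``by the same scheme'' would require the same missing injectivity at each stage, so as written it is circular (citing Greuel instead is fine).

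Keep in mind that the injectivity of $\phi$ is not a technical lemma: given the L\^e--Greuel formula and Greuel's result, it is \emph{equivalent} to the theorem, so your proposal in effect restates \eqref{qraro} and then attacks the restatement with a tool that provably fails. This is precisely the point where the paper does not argue at all but leans on Bruce and Roberts: the statement is \cite[Proposition 7.7]{BR}, whose proof rests on the determinantal colength formula of \cite[Corollary 7.9]{BR}, and the paper's own contribution (Proposition \ref{cfh} and Remark \ref{sobrecfh}) is only to supply the corrected hypothesis, namely that $\mu_X(f)<\infty$ forces $c(f,h)<\infty$, which is what legitimizes the argument in \cite{BR}. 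To complete your proof you would need either to reprove something of the strength of \cite[Corollary 7.9]{BR}, or to give a genuinely different argument for the inclusion $(A:\theta_w(f))\subseteq\langle h_1,\dots,h_p\rangle+\J(h_1,\dots,h_p)$; nothing in the proposal currently does this.
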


\begin{rem}\label{sobrecfh}
Let us observe that in the proof of the above result (see \cite[p.\,83]{BR}), the application of \cite[Corollary 7.9]{BR} plays
a fundamental role. In this proof it is essential to assume that $c(f,h)<\infty$.
The original statement of \cite[Proposition\,7.7,\,p.\,82]{BR} only requires
the germ $f$ to have an isolated critical point, but actually the correct hypothesis is to assume that $\mu_X(f)<\infty$, which in turn implies
the condition $c(f,h)<\infty$, by Proposition \ref{cfh}.
\end{rem}

As a direct application of Theorem \ref{milnorfh} we have the following result, which maybe is already known for the specialists by means
of other type of techniques. 

\begin{cor}
Let $f:(\C^n,0)\to (\C,0)$ be an analytic function germ with an isolated singularity at the origin, $n\geq 2$.
Let $i\in\{1,\dots, n-1\}$. If $h_1,\dots, h_{n-i}$ denotes a family of generic linear forms of $\C[x_1,\dots, x_n]$,
then
$$
\mu^{(i)}(f)=\mu(f, h_1,\dots, h_{n-i})=\dim_\C\frac{\O_n}{\langle \theta(f),h_1,\dots, h_{n-i}\rangle+\J(f,h_1,\dots, h_{n-i})}
$$
where $\theta(f)=x_1\frac{\partial f}{\partial x_1}+ \cdots+x_n\frac{\partial f}{\partial x_n}$.
\end{cor}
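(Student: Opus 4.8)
The plan is to view a generic family $h_1,\dots,h_{n-i}$ of linear forms as a weighted homogeneous \icis{} with respect to the weights $w=(1,\dots,1)$ and to apply Theorem~\ref{milnorfh}: for this choice of weights the associated Euler field is $\theta_w=\sum_j x_j\frac{\partial}{\partial x_j}=\theta$, precisely the field in the statement. First I would observe that, being generic, the forms $h_1,\dots,h_{n-i}$ are linearly independent, so that $X=h^{-1}(0)$ is a smooth linear subspace of $\C^n$ of dimension $i$ and $(h_1,\dots,h_{n-i})$ is an \icis{} that is weighted homogeneous of degree one with respect to $w=(1,\dots,1)$.

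To invoke Theorem~\ref{milnorfh} I must verify its hypothesis $\mu_X(f)<\infty$, and for this I would use item (5) of Theorem~\ref{nuXfinita}. Since $X$ is smooth one has $\Theta_X(x)=T_xX$ for every $x\in X$, so the unique logarithmic stratum of $X$ is $X$ itself; consequently item (5) reduces to the requirement that $f|_X$ be a submersion on $X\setminus\{0\}$, that is, that $f|_X$ have an isolated singularity at $0$. This last fact holds exactly because the section is generic, and it is the very condition that makes $\mu^{(i)}(f)$ well defined. Hence $\mu_X(f)<\infty$, and Theorem~\ref{milnorfh} both shows that $(f,h_1,\dots,h_{n-i})$ is an \icis{} and gives the second of the two claimed equalities.

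For the first equality I would argue that $\mu(f,h_1,\dots,h_{n-i})=\mu^{(i)}(f)$ using the L\^e--Greuel formula recalled before Proposition~\ref{cfh}. As $(h_1,\dots,h_{n-i})$ is smooth, its Milnor number vanishes, so that formula yields
$$
\mu(f,h_1,\dots,h_{n-i})=c(f,h)=\dim_\C\frac{\O_n}{\langle h_1,\dots,h_{n-i}\rangle+\J(f,h_1,\dots,h_{n-i})}.
$$
After a linear change of coordinates bringing the $h_j$ to $x_1,\dots,x_{n-i}$—harmless, since both $c(f,h)$ and $\mu^{(i)}(f)$ are invariant under linear coordinate changes—a direct expansion of the $(n-i+1)\times(n-i+1)$ minors of the Jacobian matrix of $(f,x_1,\dots,x_{n-i})$ shows that $\J(f,x_1,\dots,x_{n-i})=\langle \partial f/\partial x_{n-i+1},\dots,\partial f/\partial x_n\rangle$. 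Reducing the quotient above modulo $\langle x_1,\dots,x_{n-i}\rangle$ then identifies it with $\O_i/J(f|_X)$, whose dimension is $\mu(f|_X)=\mu^{(i)}(f)$ by the definition of the generic section. Combining the last two paragraphs gives the asserted chain of equalities.

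I expect the only delicate point to be the verification that $\mu_X(f)<\infty$ for the generic section: pinning down the logarithmic stratification of the smooth subspace $X$ (so that item (5) becomes a statement about $f|_X$ alone) and invoking genericity to ensure that $f|_X$ has an isolated singularity at $0$. The minor computation and the two appeals—to Theorem~\ref{milnorfh} and to the L\^e--Greuel formula—are then routine.
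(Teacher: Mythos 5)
Your proposal is correct, and for the substantive part of the statement it follows exactly the paper's route: you verify $\mu_X(f)<\infty$ through condition (5) of Theorem~\ref{nuXfinita}, using that the logarithmic stratification of the smooth linear subspace $X$ reduces this to $f\vert_X$ having an isolated singularity at $0$ (guaranteed by genericity), and you then apply Theorem~\ref{milnorfh} with $w=(1,\dots,1)$, so that $\theta_w=\theta$; this is precisely how the paper obtains the second equality and the fact that $(f,h_1,\dots,h_{n-i})$ is an \textsc{icis}. Where you genuinely diverge is the first equality: the paper disposes of $\mu^{(i)}(f)=\mu(f,h_1,\dots,h_{n-i})$ in one line, as known ``by the definition of the Milnor number of an \textsc{icis}'', whereas you derive it from the L\^e--Greuel formula (since the smooth germ $h$ has $\mu(h)=0$, it gives $\mu(f,h)=c(f,h)$) combined with the explicit identification $\J(f,x_1,\dots,x_{n-i})=\langle \partial f/\partial x_{n-i+1},\dots,\partial f/\partial x_n\rangle$ after a linear change of coordinates, which exhibits $c(f,h)$ as $\dim_\C \O_i/J(f\vert_X)=\mu(f\vert_X)=\mu^{(i)}(f)$. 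Your extra argument is sound (the minor computation is right, and the invariance of $c(f,h)$ and $\mu^{(i)}(f)$ under linear coordinate changes justifies the normalization), and it buys self-containedness at the modest cost of re-proving a standard fact that the paper simply quotes; nothing in it conflicts with, or is missing relative to, the paper's proof.
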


\begin{proof}
It is known, by the definition of Milnor number of an \textsc{icis}, that for generic linear forms $h_1,\dots, h_{n-i}\in\C[x_1,\dots, x_n]$, we have
$\mu^{(i)}(f)=\mu(f,h_1,\dots, h_{n-i})$. Let us fix such a family of linear forms $h_1,\dots, h_{n-i}$ and let
$H=\V(h_1,\dots, h_{n-i})$. Let us remark that $(h_1,\dots, h_{n-i}):(\C^n,0)\to (\C^{n-i},0)$ is a homogeneous \textsc{icis} of dimension $i$.


By Proposition \ref{nuXfinita}, $\mu_H(f)<\infty$ if and only if
the restriction $f\vert_H$ has an isolated singularity at the origin, which is the case by taking the forms $h_1,\dots, h_{n-i}$ accordingly.
Thus the result follows as a direct application of Theorem \ref{milnorfh}.
\end{proof}

Because of its similitude with \eqref{qraro}, it is worth to recall the following result of Briançon-Maynadier \cite{BM}.

\begin{thm}\cite{BM}
Let $h:(\C^n,0)\to (\C^p,0)$ be semi-weighted homogeneous \textsc{icis} with respect to $w$.
Then $\mu(h)$ only depends on $w$ and $d_w(h)$.  Moreover $\mu(h)$ is expressed as
$$
\mu(h)=\dim_\C\frac{\O_n}{\langle \theta_w(h_1),\dots, \theta_w(h_p)\rangle+\J(h_1,\dots, h_p)}.
$$
\end{thm}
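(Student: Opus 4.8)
The plan is to transport both sides of the asserted identity to the weighted homogeneous part $h^0=(h_1^0,\dots,h_p^0)$ of $h$, where $h_i^0$ is the $w$-homogeneous initial part of $h_i$, of degree $d_i=d_w(h_i)$. Semi-weighted homogeneity of $h$ means exactly that $h^0$ is again a weighted homogeneous \textsc{icis}, with the same weights and degrees. Writing $I=\langle\theta_w(h_1),\dots,\theta_w(h_p)\rangle+\J(h_1,\dots,h_p)$ and $I^0=\langle\theta_w(h_1^0),\dots,\theta_w(h_p^0)\rangle+\J(h_1^0,\dots,h_p^0)$, the theorem reduces to three assertions: (i) $\mu(h^0)=\dim_\C\O_n/I^0$, a number depending only on $w$ and $d_w(h)$; (ii) $\mu(h)=\mu(h^0)$; and (iii) $\dim_\C\O_n/I=\dim_\C\O_n/I^0$.

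For (i) I would use Euler's identity: since $h_i^0$ is $w$-homogeneous of degree $d_i$, one has $\theta_w(h_i^0)=d_ih_i^0$, so that $I^0=\langle h_1^0,\dots,h_p^0\rangle+\J(h_1^0,\dots,h_p^0)$. That the colength of this ideal equals the Milnor number of the weighted homogeneous \textsc{icis} $h^0$, and depends only on $w$ and the degrees, is classical and can be taken from the known closed formula for the Milnor number of a quasi-homogeneous complete intersection (Greuel, Looijenga); alternatively, after arranging the equations so that $(h_1^0,\dots,h_{p-1}^0)$ is a weighted homogeneous \textsc{icis} and the remaining finiteness condition of Theorem \ref{nuXfinita} holds, the case $f=h_p^0$ of Theorem \ref{milnorfh} yields (i) directly. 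For (ii), which is the $\mu$-constancy of a semi-weighted homogeneous deformation, I would realise $h$ and $h^0$ as the fibres at $t=1$ and $t=0$ of the weighted rescaling family $h_i^{(t)}(x)=t^{-d_i}h_i(t^{w_1}x_1,\dots,t^{w_n}x_n)$, which is a family of \textsc{icis} with $h^{(0)}=h^0$ and $h^{(t)}$ biholomorphically equivalent to $h$ for $t\neq0$. Upper semicontinuity of the Milnor number gives $\mu(h)=\mu(h^{(t)})\leq\mu(h^0)$ for small $t\neq0$, and conservation of the total Milnor number in the flat family, together with the fact that $\C^*$-homogeneity confines all vanishing cycles to the origin, upgrades this to equality.

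The heart of the matter, and the step I expect to be the main obstacle, is (iii). Filtering $\O_n$ by $w$-weighted order, the initial form of $\theta_w(h_i)$ is $d_ih_i^0$, while each non-zero minor of $Dh^0$ is the initial form of the corresponding minor of $Dh$; hence the ideal generated by the initial forms of the chosen generators of $I$ contains $I^0$, so that $I^0\subseteq\ini(I)$. Since passing to the initial ideal preserves colength, this already gives $\dim_\C\O_n/I=\dim_\C\O_n/\ini(I)\leq\dim_\C\O_n/I^0$. The difficulty is the reverse inequality: a minor of $Dh^0$ may vanish while the corresponding minor of $Dh$ does not, so a priori $\ini(I)$ could be strictly larger than $I^0$. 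Excluding this — equivalently, showing that the listed generators form a standard basis with $\ini(I)=I^0$, or that the colength of $I_t=\langle\theta_w(h_1^{(t)}),\dots,\theta_w(h_p^{(t)})\rangle+\J(h^{(t)})$ does not jump as $t\to0$ (flatness of this family of ideals across the special fibre) — is where the \textsc{icis} hypothesis must be used decisively. Once (iii) is secured, the upper bound above together with (i) and (ii) forces $\dim_\C\O_n/I=\mu(h^0)=\mu(h)$, which is the claimed formula, and the dependence on $w$ and $d_w(h)$ follows from (i).
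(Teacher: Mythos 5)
First, a point of comparison: this theorem is stated in the paper \emph{without proof} --- it is recalled from Brian\c{c}on--Maynadier \cite{BM}, the only commentary being that the weighted homogeneous case is Greuel's \cite[Korollar 5.8]{Greuel} (see also \cite[(5.11.a)]{Looijenga}). So there is no argument of the authors to measure yours against; what you are proposing is a proof of the result of \cite{BM} itself. Your reduction to (i)--(iii) is logically sound, and (i) is fine: by Euler's identity $I^0=\langle h_1^0,\dots,h_p^0\rangle+\J(h_1^0,\dots,h_p^0)$, Greuel's theorem identifies its colength with $\mu(h^0)$, and the dependence on $w$ and $d_w(h)$ alone follows from the known combinatorial formula for the Milnor number of a weighted homogeneous \textsc{icis}. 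The inequality $\dim_\C\O_n/I\leq\dim_\C\O_n/I^0$ via $I^0\subseteq\ini(I)$ is also correct.

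The proposal nevertheless has a genuine gap, which you candidly flag yourself: step (iii), the equality $\ini(I)=I^0$ (equivalently, the non-jumping of the colength of your family $I_t$ at $t=0$), is exactly the content of \cite{BM} and is nowhere proved in your text. It is worth seeing why it is hard. For $p=1$ the displayed formula is trivial ($\theta_w(h)\in J(h)$, so $I=J(h)$), and $\ini(J(h))=J(h^0)$ follows from the standard lemma that elements whose initial forms constitute a regular sequence form a standard basis; this is the classical semi-quasi-homogeneous hypersurface argument. For $p\geq 2$ that lemma is unavailable: $I^0$ is generated by the $p$ functions $h_i^0$ together with the $\binom{n}{p}$ maximal minors of $Dh^0$, far more than $n$ elements, and minors carry non-Koszul syzygies, so the needed syzygy-lifting or flatness statement must be built by hand --- precisely what \cite{BM} does. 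Step (ii) is also not established by what you write: for $p\geq 2$ the Milnor number of an \textsc{icis} is not the colength of an ideal, so ``conservation of the total Milnor number'' is not conservation of an intersection number; one must exclude both singular points splitting off the origin and global vanishing cycles in the nearby singular fibres $\{h^{(t)}=0\}$, and the appeal to $\C^*$-homogeneity cannot do that, since for $t\neq 0$ the map $h^{(t)}$ is not weighted homogeneous (the family is merely equivariant). Note, moreover, that (ii) is equivalent to the first assertion of the theorem, so it cannot be imported as a known fact without circularity. As it stands, your argument yields only $\dim_\C\O_n/I\leq\mu(h^0)$ and $\mu(h)\leq\mu(h^0)$ --- two inequalities pointing the same way --- and neither of the claimed equalities.
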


We remark that the previous result was proven by Greuel in \cite[Korollar 5.8]{Greuel} (see also \cite[(5.11.a)]{Looijenga}) when the map $h$ is assumed to be
weighted homogeneous (in this case we have $\langle \theta_w(h_1),\dots, \theta_w(h_p)\rangle=\langle h_1,\dots, h_p\rangle$).

The following theorem follows as an application of Theorem \ref{Derloghom}, Theorem \ref{milnorfh} and \cite[Corollary 7.9]{BR},
where this last result from \cite{BR} provides a formula expressing the colength of an ideal of maximal minors of a matrix as a sum of colengths of suitable ideals.

\begin{thm}\label{claucashomog}\cite[Theorem 3.1]{NOT2011}
Let $w\in\Z^n_{\geq 1}$,  $n\geq 2$. Let $h\in \C[x_1,\dots, x_n]$ be weighted homogeneous with respect to $w$
with isolated singularity at the origin and let $X=h^{-1}(0)$.
Let $f\in\O_n$ such that $\mu_X(f)<\infty$.
Then $(f,h):(\C^n,0)\to (\C^2,0)$ is an \textsc{icis} whose Milnor number satisfies the relation
\begin{equation}\label{nus1}
\mu_X(f)=\mu(f)+\mu(f,h).
\end{equation}
\end{thm}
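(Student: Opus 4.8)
The plan is to reduce everything to colength computations in $\O_n$ and then to split the resulting length by a multiplication-by-$h$ argument. Since $h$ is weighted homogeneous with an isolated singularity, Theorem \ref{Derloghom} gives $J_X(f)=\langle\theta_w(f)\rangle+\J(f,h)$, so that, writing $K=\langle\theta_w(f)\rangle+\J(f,h)$, we have $\mu_X(f)=\dim_\C\O_n/K$. Moreover the hypothesis $\mu_X(f)<\infty$ lets me invoke Theorem \ref{milnorfh} with $p=1$: it tells me at once that $(f,h)$ is an \textsc{icis} (the first assertion of the statement) and that
\begin{equation*}
\mu(f,h)=\dim_\C\frac{\O_n}{\langle\theta_w(f),h\rangle+\J(f,h)}=\dim_\C\frac{\O_n}{K+\langle h\rangle}.
\end{equation*}
Thus both $\mu_X(f)$ and $\mu(f,h)$ are already expressed as colengths of the single ideal $K$, respectively before and after adjoining $h$.

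Next I would compare these two colengths through multiplication by $h$. As $\O_n/K$ is a finite-dimensional $\C$-vector space, the endomorphism given by multiplication by $h$ has kernel $(K:h)/K$ and cokernel $\O_n/(K+\langle h\rangle)$; for an endomorphism of a finite-dimensional space these have equal dimension, whence
\begin{equation*}
\mu_X(f)=\dim_\C\frac{\O_n}{K}=\dim_\C\frac{\O_n}{(K:h)}+\dim_\C\frac{\O_n}{K+\langle h\rangle}=\dim_\C\frac{\O_n}{(K:h)}+\mu(f,h).
\end{equation*}
Comparing with the desired identity \eqref{nus1}, the whole theorem is therefore equivalent to the single statement $\dim_\C\O_n/(K:h)=\mu(f)$, and since $\mu_X(f)<\infty$ forces $\mu(f)<\infty$ (so that $J(f)$ is $\m_n$-primary), this will follow once I show $(K:h)=J(f)$.

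The inclusion $J(f)\subseteq(K:h)$ is elementary and I would establish it by hand. By Euler's relation $d_w(h)\,h=\theta_w(h)=\sum_j w_jx_j\frac{\partial h}{\partial x_j}$, multiplying by $\frac{\partial f}{\partial x_i}$ and using the trivial splitting $\frac{\partial f}{\partial x_i}\frac{\partial h}{\partial x_j}=\big(\frac{\partial f}{\partial x_i}\frac{\partial h}{\partial x_j}-\frac{\partial f}{\partial x_j}\frac{\partial h}{\partial x_i}\big)+\frac{\partial f}{\partial x_j}\frac{\partial h}{\partial x_i}$ gives
\begin{equation*}
d_w(h)\,h\,\frac{\partial f}{\partial x_i}=\sum_j w_jx_j\Big(\frac{\partial f}{\partial x_i}\frac{\partial h}{\partial x_j}-\frac{\partial f}{\partial x_j}\frac{\partial h}{\partial x_i}\Big)+\frac{\partial h}{\partial x_i}\,\theta_w(f).
\end{equation*}
Since the first summand lies in $\J(f,h)$ and the second in $\langle\theta_w(f)\rangle$, we get $h\,\frac{\partial f}{\partial x_i}\in K$ for every $i$, hence $h\,J(f)\subseteq K$, i.e. $J(f)\subseteq(K:h)$. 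This already yields the inequality $\mu_X(f)\le\mu(f)+\mu(f,h)$.

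The reverse inclusion $(K:h)\subseteq J(f)$, equivalently $\mu_X(f)\ge\mu(f)+\mu(f,h)$, is the main obstacle. It cannot be read off from $J(f)$ alone: the class of $h$ is a zero-divisor modulo $J(f)$ (indeed $h(0)=0$, so $h$ is nilpotent in the Artinian ring $\O_n/J(f)$), and consequently $(J(f):h)$ is in general strictly larger than $J(f)$; only the finer determinantal structure of $K$ can produce the exact identity. This is precisely where I would appeal to \cite[Corollary 7.9]{BR}: applied to the ideal $K$, generated by $\theta_w(f)$ together with the maximal minors of the Jacobian matrix of $(f,h)$, it expresses $\dim_\C\O_n/K$ as a sum of two colengths, which one identifies with $\mu(f,h)$ and $\mu(f)$ respectively. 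Matching this decomposition with the multiplication-by-$h$ splitting above supplies the missing inequality $\dim_\C\O_n/(K:h)=\mu(f)$ and closes the argument.
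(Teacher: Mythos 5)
Your proposal follows, in outline, the same route the paper indicates: the paper does not reprove this theorem but quotes it from \cite[Theorem 3.1]{NOT2011}, remarking only that it follows from Theorem \ref{Derloghom}, Theorem \ref{milnorfh} and \cite[Corollary 7.9]{BR} --- exactly the three ingredients you invoke. The part of your argument that is actually carried out is correct: writing $K=\langle\theta_w(f)\rangle+\J(f,h)$, Theorem \ref{Derloghom} gives $\mu_X(f)=\dim_\C\O_n/K$; Theorem \ref{milnorfh} with $p=1$ gives that $(f,h)$ is an \textsc{icis} with $\mu(f,h)=\dim_\C\O_n/(K+\langle h\rangle)$; the kernel--cokernel count for multiplication by $h$ on the Artinian ring $\O_n/K$ gives $\mu_X(f)=\dim_\C\O_n/(K:h)+\mu(f,h)$; and your Euler-relation computation showing $h\,J(f)\subseteq K$ is valid. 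Together these rigorously prove the inequality $\mu_X(f)\leq\mu(f)+\mu(f,h)$ and reduce the theorem to the single equality $(K:h)=J(f)$.

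The gap is the reverse inclusion $(K:h)\subseteq J(f)$, which you correctly flag as ``the main obstacle'' and then do not prove. Your appeal to \cite[Corollary 7.9]{BR} is question-begging as written: you assert that the corollary, applied to $K$, ``expresses $\dim_\C\O_n/K$ as a sum of two colengths, which one identifies with $\mu(f,h)$ and $\mu(f)$'' --- but that assertion \emph{is} the theorem being proved, and if the corollary delivered it in one stroke, your entire colon-ideal reduction would be superfluous. You never state what Corollary 7.9 says, nor verify its hypotheses for the object at hand (note that $K$ is not visibly an ideal of maximal minors: it is the ideal of $2\times 2$ minors of $D(f,h)$ augmented by the extra generator $\theta_w(f)$, and how the corollary handles that extra generator is precisely the point), nor carry out the identification of the second colength in its conclusion with $\mu(f)$, which is the only nontrivial content left after your reduction. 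Even the identification of the first colength with $\mu(f,h)$ is not free: in the paper's logic that is Theorem \ref{milnorfh}, whose proof in \cite{BR} itself rests on Corollary 7.9 (see Remark \ref{sobrecfh}), so the corollary cannot be treated as an unexamined black box. In sum, you have a correct proof of one inequality plus an unverified citation standing in for the other; the hard half of the theorem is missing.
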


\begin{rem}
We observe that in Theorem \ref{claucashomog} the condition that $h$ has an isolated singularity at the origin can not be removed, as
Example \ref{hnosing} shows. Obviously, if $X=h^{-1}(0)$, where $h:(\C^n,0)\to (\C,0)$ is weighted homogeneous with respect to
a given $w\in\Z^n_{\geq 1}$, and $f\in\O_n$ verifies that $\langle \theta_w(f)\rangle+\J(f,h)$ has finite colength,
then this colength is an upper bound for $\mu_X(f)$ (this bound is not tight, as is also reflected in Example \ref{hnosing}).
\end{rem}

\begin{ex}\label{hnosing}
Let $f$ and $h$ be the functions of $\O_3$ defined by $f(x,y,z)= x^3+y^3+z^3$ and $h(x,y,z)=xyz$, for all $(x,y,z)\in\C^3$.
Let $X=h^{-1}(0)$. We have that $\Theta_X=\langle (x,0,0),(0, y, 0), (0, 0,z)\rangle$. Thus $J_X(f)=\langle x^3, y^3, z^3\rangle$,
which implies that $\mu_X(f)=27$. It is straightforward to check that the ideal $\langle f,h\rangle+ \J(f,h)$ has finite colength. Hence,
$(f,h):(\C^3,0)\to(\C^2,0)$ is an $\icis$. By the Lê-Greuel formula we have the relation $\mu(f)+\mu(f,h)=
 \dim_\C\O_n/(\langle f\rangle+\J(f,h))=57$, which is different
from $\mu_X(f)$ in this case.
\end{ex}

As a direct application of Theorem \ref{claucashomog}, the following result follows.

\begin{cor}\label{muX-muY}
Let $f,h\in \C[x_1,\dots, x_n]$ be weighted homogeneous polynomials, not necessarily with respect to the same vector of weights, $n\geq 2$.
Let $X=h^{-1}(0)$ and $Y=f^{-1}(0)$.
Let us suppose that $\mu_{X}(f)$ and $\mu_{Y}(h)$ are finite. Then
$$
\mu_X(f)-\mu_Y(h)=\mu(f)-\mu(h).
$$
\end{cor}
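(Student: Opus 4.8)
The plan is to apply Theorem \ref{claucashomog} to each of the two pairs $(f,h)$ and $(h,f)$ separately and then subtract the resulting identities. The whole point is that the mixed term $\mu(f,h)$ is symmetric in its two arguments and therefore cancels, leaving precisely $\mu(f)-\mu(h)$. So the corollary really is a formal consequence of applying one theorem twice.

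First I would verify that the hypotheses of Theorem \ref{claucashomog} are met in both directions. Computing $\mu_X(f)$ via that theorem requires the defining polynomial $h$ of $X$ to have an isolated singularity at the origin, and symmetrically, computing $\mu_Y(h)$ requires $f$ to have an isolated singularity. Both facts follow from the finiteness assumptions together with the inequality $\mu_X(f)\geq \mu(f)$ recorded after Definition \ref{muXf}: since $J_X(f)\subseteq J(f)$, finiteness of $\mu_X(f)$ forces $J(f)$ to have finite colength, i.e. $f$ has an isolated singularity at $0$; likewise $\mu_Y(h)<\infty$ forces $h$ to have an isolated singularity. Hence both $f$ and $h$ are weighted homogeneous germs with isolated singularity at the origin, and Theorem \ref{claucashomog} applies to each pair (the fact that $f$ and $h$ may be weighted homogeneous with respect to different weight vectors causes no trouble, since the theorem is applied to each one with its own weights).

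Concretely, Theorem \ref{claucashomog} applied with $X=h^{-1}(0)$ gives
$$
\mu_X(f)=\mu(f)+\mu(f,h),
$$
and the same theorem applied with the roles of $f$ and $h$ exchanged, i.e. with $Y=f^{-1}(0)$, gives
$$
\mu_Y(h)=\mu(h)+\mu(h,f).
$$
Subtracting the second identity from the first yields
$$
\mu_X(f)-\mu_Y(h)=\mu(f)-\mu(h)+\big(\mu(f,h)-\mu(h,f)\big).
$$

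The only point requiring justification — and the sole place where anything beyond bookkeeping happens — is that $\mu(f,h)=\mu(h,f)$, so that the parenthesised term vanishes and the claimed formula drops out. Here $\mu(f,h)$ is the Milnor number of the \icis{} map $(f,h):(\C^n,0)\to(\C^2,0)$, which is a topological invariant of the germ of the complete intersection curve $\{f=h=0\}$ and does not depend on the ordering of the two defining functions; permuting the components leaves both the variety and its Milnor fibre unchanged. I would simply invoke this symmetry. No genuine obstacle arises: the analytic content is entirely carried by Theorem \ref{claucashomog}, and the role of this argument is to combine its two instances and observe the cancellation.
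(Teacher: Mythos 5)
Your proof is correct and follows essentially the same route as the paper: both derive the isolated-singularity hypotheses for $f$ and $h$ from the finiteness assumptions via the inclusion $J_X(f)\subseteq J(f)$, apply Theorem \ref{claucashomog} to each pair, and cancel the symmetric term $\mu(f,h)=\mu(h,f)$. The paper's proof is simply a compressed version of yours, writing the conclusion as $\mu_X(f)-\mu(f)=\mu(f,h)=\mu_Y(h)-\mu(h)$.
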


\begin{proof}
The condition $\mu_{X}(f)<\infty$ implies that $J(f)$ has finite colength. Analogously, $J(h)$ has finite colength. Therefore, by
Theorem \ref{claucashomog}, $(f,h)$ is an \textsc{icis} and $\mu_X(f)-\mu(f)=\mu(f,h)=\mu_Y(h)-\mu(h)$.
\end{proof}

\begin{cor}\label{clau}
Let $w\in\Z^n_{\geq 1}$, $n\geq 2$. Let $h\in \C[x_1,\dots, x_n]$ be weighted homogeneous with respect to $w$
with isolated singularity at the origin. Let $f\in\O_n$. Let us suppose that the ideal $\langle\theta_w(f)\rangle+\J(f,h)$ has finite colength.
Then $\langle f\rangle+\J(f,h)$ has also finite colength
and
\begin{equation}\label{eqpp}
\dim_\C\frac{\O_n}{\langle f\rangle+\J(f,h)}=\dim_\C\frac{\O_n}{\langle \theta_w(f)\rangle+\J(f,h)}.
\end{equation}
\end{cor}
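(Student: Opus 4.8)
The plan is to recognize that both colengths appearing in (\ref{eqpp}) compute one and the same quantity, namely $\mu(f)+\mu(f,h)$, reading the right-hand side through the Bruce--Roberts number and the left-hand side through the L\^e--Greuel formula. First I would observe that, since $h$ is weighted homogeneous with isolated singularity, Theorem \ref{Derloghom} gives $J_X(f)=\langle\theta_w(f)\rangle+\J(f,h)$; hence the hypothesis that $\langle\theta_w(f)\rangle+\J(f,h)$ has finite colength is precisely the statement $\mu_X(f)<\infty$, and the right-hand side of (\ref{eqpp}) equals $\mu_X(f)$. Because $J_X(f)\subseteq J(f)$, this finiteness also forces $\mu(f)<\infty$, so $f$ has an isolated singularity and $(f)$ is an \textsc{icis}.

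Next I would apply Theorem \ref{claucashomog}: under exactly these hypotheses it asserts that $(f,h)$ is an \textsc{icis} and that $\mu_X(f)=\mu(f)+\mu(f,h)$, which evaluates the right-hand side of (\ref{eqpp}) as $\mu(f)+\mu(f,h)$. For the left-hand side I would invoke the L\^e--Greuel formula recalled before Proposition \ref{cfh}, applied to the pair of \textsc{icis} maps $(f)$ and $(f,h)$. Taking $p=1$ with $f$ in the role of the first component and $h$ as the added function, the formula yields simultaneously the finiteness of the colength of $\langle f\rangle+\J(f,h)$ and the value $\dim_\C\O_n/(\langle f\rangle+\J(f,h))=\mu(f)+\mu(f,h)$. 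Comparing the two expressions then gives (\ref{eqpp}).

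The individual steps are short, so the only real care point is matching the L\^e--Greuel bookkeeping to the present notation: one must use that reordering the functions leaves the maximal-minor ideal unchanged, so that $\J(f,h)=\J(h,f)$ and $\mu(f,h)=\mu(h,f)$, and one must check that the two maps fed into the formula are genuinely \textsc{icis}. This latter point is exactly where Theorem \ref{claucashomog} is essential: the \textsc{icis} property of $(f,h)$ is \emph{not} automatic from $f$ having an isolated singularity alone, and it is guaranteed here only through the finiteness hypothesis on $\langle\theta_w(f)\rangle+\J(f,h)$ (and, behind it, Proposition \ref{cfh}). Once that is secured, the equality follows purely from evaluating $\mu(f)+\mu(f,h)$ in two ways.
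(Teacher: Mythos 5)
Your proposal is correct and takes essentially the same route as the paper's proof: both identify the right-hand side of (\ref{eqpp}) with $\mu_X(f)$ via Theorem \ref{Derloghom}, secure that $(f,h)$ is an \textsc{icis} through the finiteness hypothesis (Proposition \ref{cfh}, packaged in your case inside Theorem \ref{claucashomog} together with the identity $\mu_X(f)=\mu(f)+\mu(f,h)$), and evaluate the left-hand side as $\mu(f)+\mu(f,h)$ by the L\^e--Greuel formula. The only cosmetic difference is that the paper invokes Proposition \ref{cfh} explicitly and is terser about the final identification, while you route both the \textsc{icis} property and the sum formula through Theorem \ref{claucashomog}; the ingredients and their roles coincide.
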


\begin{proof}
Let $X=h^{-1}(0)$. Hence $J_X(f)=\langle\theta_w(f)\rangle+\J(f,h)$.
By Proposition \ref{cfh}, we have $c(f,h)<\infty$, which implies that $(f,h)$ is an \textsc{icis}.
Since $f$ has an isolated singularity at the origin, we have that
$$
\mu(f)+\mu(f,h)=\dim_\C\frac{\O_n}{\langle f\rangle+\J(f,h)},
$$
by \cite[Theorem 3.7.1]{Le}.
Then (\ref{eqpp}) follows as a direct consequence of Theorem \ref{Derloghom}.
\end{proof}

Let $f\in \O_n$ and let $i\in \{1,\dots,n\}$. By virtue of Theorem \ref{Xicisqh} and the upper semicontinuity of
the colength of ideals, we can consider the minimum value of $\mu_H(f)$ when $H$ varies in the set of
linear subspaces of $\C^n$ of dimension $i$. Let us denote this number by $\mu_{H^{(i)}}(f)$. We will also write
$f\vert_{H^{(i)}}$ to refer to the restriction of $f$ to a generic linear subspace of $\C^n$ of dimension $i$.

Let $I$ be an ideal of finite colength in a Noetherian local ring $(R,\m)$ of dimension $d$ and let $i\in\{0,1,\dots, d\}$.
Then $e_i(I)$ will denote the mixed multiplicity $e(I,\dots, I,\m,\dots,\m)$, where $I$ is repeated
$i$ times and $\m$ is repeated $n-i$ times (we refer to \cite{HS} and \cite{Cargese} for the definition and basic properties
of mixed multiplicities). We recall that 
$e_n(I)=e(I)$, where
$e(I)$ denotes the Samuel multiplicity of $I$.

\begin{prop}\label{primerasumademuis}
Let $f\in\O_n$ and let $i\in\{0,1,\dots, n-1\}$. If $f$ has an isolated singularity at the origin, then
\begin{equation}\label{sumademuis}
\mu_{H^{(i)}}(f\vert_{H^{(i+1)}})=\mu^{(i+1)}(f)+\mu^{(i)}(f)=e_i\left(J(f)\frac{\O_n}{\langle f\rangle}\right).
\end{equation}
\end{prop}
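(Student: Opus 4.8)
The plan is to treat the middle quantity $\mu^{(i+1)}(f)+\mu^{(i)}(f)$ as a bridge and to prove the two equalities in \eqref{sumademuis} separately: the left-hand one by the Bruce--Roberts machinery developed above (Theorem \ref{claucashomog}), and the right-hand one by the theory of mixed multiplicities via generic hyperplane sections, reducing it to the top-dimensional case (Teissier's formula recalled in the Introduction).

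For the first equality I would work inside a generic linear subspace. Fix a generic $(i+1)$-dimensional linear subspace $H^{(i+1)}\subseteq\C^n$ and put $g=f\vert_{H^{(i+1)}}$; since $f$ has an isolated singularity and $H^{(i+1)}$ is generic, $g$ has an isolated singularity at $0$ and $\mu(g)=\mu^{(i+1)}(f)$ by definition of the $\mu^*$-sequence. Choose a generic linear form $\ell$ on $H^{(i+1)}$ and set $H^{(i)}=\ell^{-1}(0)$, a generic $i$-plane, so that $g\vert_{H^{(i)}}=f\vert_{H^{(i)}}$ has an isolated singularity and $\mu_{H^{(i)}}(g)<\infty$ by Theorem \ref{nuXfinita}. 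As $\ell$ is weighted homogeneous with (trivially) isolated singularity, Theorem \ref{claucashomog} applied to the pair $(g,\ell)$ on $H^{(i+1)}\cong\C^{i+1}$ gives $\mu_{H^{(i)}}(g)=\mu(g)+\mu(g,\ell)$. It remains to identify $\mu(g,\ell)$: by the L\^e--Greuel formula \cite{Le} (with $\mu(\ell)=0$) one gets $\mu(g,\ell)=\dim_\C\O_{i+1}/(\langle\ell\rangle+\J(g,\ell))=\mu(g\vert_{\ell^{-1}(0)})=\mu^{(i)}(f)$, whence $\mu_{H^{(i)}}(f\vert_{H^{(i+1)}})=\mu^{(i+1)}(f)+\mu^{(i)}(f)$. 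The degenerate case $i=0$, where Theorem \ref{claucashomog} does not apply since $i+1=1$, I would check directly: there $J_{\{0\}}(g)=\m_1 J(g)$, whose colength is $\operatorname{mult}(f)=\mu^{(1)}(f)+\mu^{(0)}(f)$ under the convention $\mu^{(0)}(f)=1$.

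For the second equality I would invoke the computation of mixed multiplicities by general elements \cite{Cargese,HS}. Write $A=\O_n/\langle f\rangle$, so $\dim A=n-1$, and let $\mathfrak j$ be the image of $J(f)$ in $A$. Peeling off $n-1-i$ general linear forms $\ell_1,\dots,\ell_{n-1-i}$ converts the mixed multiplicity into an ordinary Samuel multiplicity, $e_i(J(f)A)=e\big(\overline{\mathfrak j};A/(\ell_1,\dots,\ell_{n-1-i})\big)$, of the image $\overline{\mathfrak j}$ of $J(f)$ in the $i$-dimensional ring $A/(\ell_1,\dots,\ell_{n-1-i})=\O_{H^{(i+1)}}/\langle g\rangle$. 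On the other hand, Teissier's top-dimensional formula applied to $g$ on $\C^{i+1}$ reads $\mu(g)+\mu^{(i)}(g)=e\big(J(g);\O_{i+1}/\langle g\rangle\big)$, and its left side equals $\mu^{(i+1)}(f)+\mu^{(i)}(f)$ as in the first part. Thus the second equality is reduced to showing that the image $\overline{\mathfrak j}$ of $J(f)$ and the Jacobian ideal $J(g)=J(f\vert_{H^{(i+1)}})$ yield the same Samuel multiplicity in $\O_{H^{(i+1)}}/\langle g\rangle$.

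This last comparison is, I expect, the main obstacle. One always has $J(g)\subseteq\overline{\mathfrak j}$, and the crux is that the transverse partials $(\partial f/\partial x_j)\vert_{H^{(i+1)}}$ are integral over $J(g)$ modulo $\langle g\rangle$ once $H^{(i+1)}$ is chosen generically; granting this, $\overline{\mathfrak j}$ and $J(g)$ share the same integral closure, hence, by Rees' multiplicity theorem, the same Samuel multiplicity, which closes the argument. The required integrality is a genericity (transversality to the polar varieties of $f$) statement of Teissier's type \cite{Cargese}, and carefully establishing it --- or else quoting directly Teissier's identification of the mixed multiplicities of $J(f)$ in $\O_n/\langle f\rangle$ with the consecutive sums $\mu^{(i)}(f)+\mu^{(i+1)}(f)$ --- is the technical heart of the proof.
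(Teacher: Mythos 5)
Your argument for the first equality is essentially the paper's own. The paper first proves the hyperplane case $\mu_{H^{(n-1)}}(f)=\mu(f)+\mu^{(n-1)}(f)$ in $\C^n$ --- finiteness of $\mu_H(f)$ for generic $H$ via Theorem \ref{nuXfinita}, then Theorem \ref{claucashomog} with $h$ a generic linear form, then the identification $\mu(f,h)=\mu(f\vert_H)=\mu^{(n-1)}(f)$ --- and then applies this identity to $f\vert_{H^{(i+1)}}$; you run exactly the same computation, just carried out directly inside the generic $(i+1)$-plane, with the L\^e--Greuel formula playing the role of the identification the paper merely recalls. Two differences are worth recording. First, your separate treatment of the degenerate case $i=0$, where Theorem \ref{claucashomog} is unavailable and one computes $\dim_\C\O_1/\m_1J(g)=\operatorname{ord}(f)=\mu^{(1)}(f)+\mu^{(0)}(f)$ by hand, makes you more careful than the paper, whose proof of the first equality explicitly runs only over $i\in\{1,\dots,n-1\}$. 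Second, for the equality with the mixed multiplicity the paper does no work at all: it cites Teissier \cite[p.\,322]{Cargese}, whose result already gives $\mu^{(i+1)}(f)+\mu^{(i)}(f)=e_i\big(J(f)\frac{\O_n}{\langle f\rangle}\big)$ for \emph{every} $i$, not only the top case $e_{n-1}=e$. Your plan to rebuild the general case from the top-dimensional Samuel formula --- peeling off general linear forms to reduce $e_i$ to an ordinary multiplicity and then showing that the restricted ideal $\overline{\mathfrak j}$ and $J(g)$ have the same integral closure --- is a viable known route, but its crux (integral dependence of the transverse partials on $J(g)$ modulo $g$, i.e.\ transversality of the generic section to the polar varieties) is itself a theorem of Teissier, as you acknowledge; once you fall back on citing Teissier, the reduction machinery becomes superfluous, and that fallback is precisely what the paper does.
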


\begin{proof}
The second equality in \eqref{sumademuis}, for all $i\in\{0,1,\dots, n-1\}$, is a result of Teissier in \cite[p.\,322]{Cargese}.
Let $H$ be a linear subspace of $\C^n$ of dimension $n-1$ and let $h\in\C[x_1,\dots,x_n]$ be a linear form
such that $H=h^{-1}(0)$. Since the logarithmic stratification of $H$ is given by $H$ itself,
Theorem \ref{nuXfinita} shows that $\mu_H(f)<\infty$ if and only if the restriction of $f$ to $H$ is a submersion except, possibly, at the origin, which is
to say that the restriction $f\vert_H$ has, at most, an isolated singularity at the origin. The latter condition holds for a generic choice of
$H$ in the Grassmannian variety of linear subspaces of $\C^n$ of dimension $n-1$ (see for instance \cite[p.\,299]{Cargese}).
Therefore, we can apply Theorem \ref{claucashomog} to say that for a generic linear subspace $H$ of $\C^n$ of dimension $n-1$ we have that
$\mu_H(f)=\mu(f)+\mu(f,h)$. We recall that $\mu(f,h)=\mu(f\vert_H)=\mu^{(n-1)}(f)$. Hence
\begin{equation}\label{sumademuis2}
\mu_{H^{(n-1)}}(f)=\mu(f)+\mu^{(n-1)}(f).
\end{equation}

Let us fix an index $i\in\{1,\dots, n-1\}$. If we apply \eqref{sumademuis2} to $f\vert_{H^{(i+1)}}$, then we obtain that
$\mu_{H^{(i)}}(f\vert_{H^{(i+1)}})=\mu(f\vert_{H^{(i+1)}})+\mu^{(i)}(f\vert_{H^{(i+1)}})=\mu^{(i+1)}(f)+\mu^{(i)}(f)$.
\end{proof}

In the next example we see that the numbers $\mu_{H^{(i)}}(f\vert_{H^{(i+1)}})$ and $\mu_{H^{(i)}}(f)$
are different in general. Let us remark that in the first case the subscript $H^{(i)}$ makes reference to a linear subspace of codimension $1$ in $\C^{i+1}$.

\begin{ex}
Let $f\in\O_4$ be the function given by $f(x,y,z,t)=x^3+xy^4+y^3z+t^3+yz^5$. We have that
$\mu^*(f)=(60, 12, 4,2,1)$. Therefore relation (\ref{sumademuis}) shows that $\mu_{H^{(0)}}(f\vert_{H^{(1)}})=3$, $\mu_{H^{(1)}}(f\vert_{H^{(2)}})=6$,
$\mu_{H^{(2)}}(f\vert_{H^{(3)}})=16$. Moreover $\mu_{H^{(3)}}(f)=72$, $\mu_{H^{(2)}}(f)=68$
and $\mu_{H^{(1)}}(f)=66$, $\mu_{H^{0}}(f)=64$.
\end{ex}

The following result shows another aspect of Bruce-Roberts' Milnor numbers.

\begin{cor}
Let $f:(\C^n,0)\to (\C,0)$ be a weighted homogeneous function with an isolated singularity at the origin.
Let $Y=f^{-1}(0)$. Then
$$
\mu^{(n-1)}(f)=\mu_{Y}(h)
$$
for a generic choice of a linear form $h\in\C[x_1,\dots, x_n]$.
\end{cor}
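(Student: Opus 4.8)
The plan is to combine Corollary~\ref{muX-muY} with the relation \eqref{sumademuis2} established inside Proposition~\ref{primerasumademuis}. First I would fix a generic linear form $h=a_1x_1+\cdots+a_nx_n$; such an $h$ is homogeneous of degree $1$ (hence weighted homogeneous with respect to $(1,\dots,1)$) and satisfies $\mu(h)=0$, since $J(h)=\langle a_1,\dots,a_n\rangle=\O_n$. Setting $X=h^{-1}(0)$ and granting for the moment that both $\mu_X(f)$ and $\mu_Y(h)$ are finite, Corollary~\ref{muX-muY} applied to the pair $(f,h)$ gives $\mu_X(f)-\mu_Y(h)=\mu(f)-\mu(h)=\mu(f)$. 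On the other hand, since $X=h^{-1}(0)$ is a generic hyperplane, $\mu_X(f)=\mu_{H^{(n-1)}}(f)$, and \eqref{sumademuis2} yields $\mu_{H^{(n-1)}}(f)=\mu(f)+\mu^{(n-1)}(f)$. Subtracting $\mu(f)$ gives $\mu_Y(h)=\mu^{(n-1)}(f)$, which is the desired equality. (Equivalently, one may apply Theorem~\ref{claucashomog} directly to the variety $Y=f^{-1}(0)$ and the function $h$: it gives $\mu_Y(h)=\mu(h)+\mu(h,f)=\mu(f,h)$, and $\mu(f,h)=\mu^{(n-1)}(f)$ for a generic linear form by the definition of $\mu^{(n-1)}$.)

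Next I would justify the two finiteness assumptions. That $\mu_X(f)<\infty$ for generic $X=h^{-1}(0)$ is already contained in the proof of Proposition~\ref{primerasumademuis}: a generic hyperplane section $f\vert_H$ has an isolated singularity, so Theorem~\ref{nuXfinita} gives $\mu_H(f)<\infty$. The finiteness of $\mu_Y(h)$ is the point that requires genuine work, and I expect it to be the main obstacle. By Theorem~\ref{Derloghom} we have $J_Y(h)=\langle\theta_w(h)\rangle+\J(f,h)$, so that $V(J_Y(h))=V(\theta_w(h))\cap\Gamma$, where $\Gamma=V(\J(f,h))=\{x:\nabla f(x)\text{ is proportional to }(a_1,\dots,a_n)\}$ is the polar locus of $f$ in the direction $a$.

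The key computation I would carry out is to restrict the Euler relation to $\Gamma$. On $\Gamma$ we may write $\nabla f=\lambda\,(a_1,\dots,a_n)$ for a suitable function $\lambda$, and since $\theta_w(h)=\sum_i w_ix_ia_i$, substituting $\frac{\partial f}{\partial x_i}=\lambda a_i$ into $\theta_w(f)=\sum_i w_ix_i\frac{\partial f}{\partial x_i}$ gives $\theta_w(f)=\lambda\,\theta_w(h)$ along $\Gamma$. Euler's theorem for the weighted homogeneous germ $f$ (of weighted degree $d$) gives $\theta_w(f)=d\,f$, hence $d\,f=\lambda\,\theta_w(h)$ on $\Gamma$. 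Therefore, as germs of sets at $0$,
$$
Y\cap\Gamma=\{f=0\}\cap\Gamma=\big(\{\lambda=0\}\cap\Gamma\big)\cup V(J_Y(h)).
$$
Because $f$ has an isolated singularity, $\{\lambda=0\}\cap\Gamma=\{\nabla f=0\}=\{0\}$, so $Y\cap\Gamma=\{0\}\cup V(J_Y(h))$. It thus suffices to know that $Y\cap\Gamma=\{0\}$ as a germ at $0$ for generic $a$; this is the classical fact that the polar curve of an isolated hypersurface singularity meets $Y$ properly at the origin for a generic direction (the corresponding intersection multiplicity being $\mu^{(n)}(f)+\mu^{(n-1)}(f)$, by Teissier). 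Granting this, $V(J_Y(h))\subseteq\{0\}$, and $\mu_Y(h)<\infty$ by Theorem~\ref{nuXfinita}.

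The hardest step is therefore the finiteness of $\mu_Y(h)$; once the Euler-relation computation above reduces it to the proper intersection $Y\cap\Gamma=\{0\}$, the remainder is the short algebraic combination of Corollary~\ref{muX-muY} and \eqref{sumademuis2}. Alternatively, one could deduce finiteness from the upper semicontinuity of the colength $\dim_\C\O_n/J_Y(h)$ as $a$ varies, provided one first exhibits a single linear form for which it is finite; the polar-locus argument has the advantage of handling all generic $a$ at once and of making the genericity condition on $h$ explicit.
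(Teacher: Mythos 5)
Your proposal is correct, and its main line --- fixing a generic linear $h$ with $\mu(h)=0$, combining Corollary \ref{muX-muY} with \eqref{sumademuis2} (equivalently, applying Theorem \ref{claucashomog} directly to the pair $(Y,h)$), and subtracting $\mu(f)$ --- is exactly the paper's proof. Where you genuinely diverge is the step you single out as the main obstacle, the finiteness of $\mu_Y(h)$. The paper disposes of it in one sentence using criterion (5) of Theorem \ref{nuXfinita}: since $f$ has an isolated singularity, the logarithmic strata of $Y$ are just $\{0\}$ and $Y\setminus\{0\}$, and a generic linear form restricts to a submersion on $Y\setminus\{0\}$ near the origin (the fixed generic hyperplane $\ker h$ is not a tangent hyperplane of $Y$ at points near $0$). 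You instead use criterion (2): via Theorem \ref{Derloghom} you write $J_Y(h)=\langle\theta_w(h)\rangle+\J(f,h)$, and your Euler-relation computation along the polar locus $\Gamma$ correctly shows that $V(J_Y(h))\subseteq Y\cap\Gamma$, reducing finiteness to the classical fact (Teissier) that the generic polar curve meets $Y$ only at the origin. Both routes ultimately rest on a genericity statement of essentially the same strength --- a generic hyperplane is not a limit of tangent hyperplanes of $Y$, respectively the generic polar curve has no component inside $Y$ --- so neither is circular; the paper's version is shorter and stays at the level of the logarithmic stratification, while yours is purely algebraic, makes the genericity condition on $h$ explicit, and pinpoints exactly which classical input is consumed. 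Your parenthetical alternative for the identity itself ($\mu_Y(h)=\mu(h)+\mu(h,f)=\mu^{(n-1)}(f)$ by Theorem \ref{claucashomog}) is also valid and is in effect what the proof of Corollary \ref{muX-muY} does internally.
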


\begin{proof}
Let $h\in\C[x_1,\dots, x_n]$ be a generic linear form. Let $X=h^{-1}(0)$. Obviously, the restriction of $h$ to any logarithmic stratum of $Y$
is a submersion except possibly at $0$. Therefore $\mu_Y(h)<\infty$.
By Corollary \ref{muX-muY} we have that $\mu_X(f)=\mu_Y(h)+\mu(f)-\mu(h)=\mu_Y(h)+\mu(f)$, since $\mu(h)=0$.
Moreover, by (\ref{sumademuis2}) we obtain that $\mu_X(f)=\mu_{H^{(n-1)}}(f)=\mu(f)+\mu^{(n-1)}(f)$. Joining both relation, the result follows.
\end{proof}

\section{The Bruce-Roberts' Tjurina number}\label{Tjurina}

In this section we introduce the notion of Tjurina number in the context described in the previous section. We will compare this number with Bruce-Roberts'
Milnor numbers in Theorem \ref{boundquot}.

\begin{defn}
Let $X$ be an analytic subvariety of $(\C^n,0)$ and let $f\in\O_n$. We define
\begin{equation}\label{defnBRT}
\tau_X(f)=\dim_\C\frac{\mathcal O_n}{\langle f \rangle+J_X(f)}.
\end{equation}
When the colength on the right of (\ref{defnBRT}) is finite, we refer to $\tau_X(f)$ as
the {\it Bruce-Roberts' Tjurina number of $f$ with respect to $X$}.
\end{defn}

Let $R$ be a ring and let $I$ be an ideal of $R$. Let $f\in R$. We denote by $r_f(I)$ the minimum of those $r\in\Z_{\geq 1}$
such that $f^r\in I$. If no such $r$ exist, then we set $r_f(I)=\infty$.
Let us also denote by $\varphi_{f,I}$ the morphism $R/I\to R/I$ defined by $g+I\mapsto fg+I$, for all $g\in R$.
If $M$ is an $R$-module, then we denote by $\ell(M)$ the length of $M$. As usual, we refer to $\ell(R/I)$ as the colength of $I$.
With aim of comparing Bruce-Robert's Milnor and Tjurina numbers, we show the following result, which is inspired by the main result of Liu in \cite{Liu}.

\begin{thm}\label{boundquot}
Let $(R,\m)$ be a Noetherian local ring.
Let $I$ be an ideal of $R$ of finite colength and let $f\in R$ such that $r_f(I)<\infty$. Then
\begin{equation}\label{lengths}
\frac{ \ell\left(\frac{R}{I}\right)}{\ell\left(\frac{R}{\langle f\rangle + I}\right)}\leq r_f(I)
\end{equation}
and equality holds if and only if $\ker(\varphi_{f,I})=\frac{\langle f^{r-1}\rangle+I}{I}$, where $r=r_f(I)$.
\end{thm}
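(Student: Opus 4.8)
The plan is to interpolate between $I$ and $\langle f\rangle+I$ by the descending chain of ideals $J_k=\langle f^k\rangle+I$, and then to show that the lengths of the successive quotients are non-increasing; this is an argument of Hilbert-function type. Write $r=r_f(I)$, so that $f^r\in I$ while $f^{r-1}\notin I$. Because $f^{k+1}\in\langle f^k\rangle$ and $f^r\in I$, one obtains
\[
R=J_0\supseteq J_1\supseteq\cdots\supseteq J_r=I .
\]
Setting $d_k=\ell(J_k/J_{k+1})$ for $0\leq k\leq r-1$, additivity of length along this chain gives $\ell(R/I)=\sum_{k=0}^{r-1}d_k$, while $J_0=R$ and $J_1=\langle f\rangle+I$ give $d_0=\ell(R/(\langle f\rangle+I))$.

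The decisive step is the monotonicity $d_0\geq d_1\geq\cdots\geq d_{r-1}$. To establish it I would show that multiplication by $f$ induces a surjection $J_{k-1}/J_k\to J_k/J_{k+1}$ for each $k$. It is well defined because $fJ_{k-1}\subseteq J_k$ and $fJ_k\subseteq J_{k+1}$, both consequences of $fI\subseteq I$, and it is surjective because $J_k=fJ_{k-1}+I$ with $I\subseteq J_{k+1}$. A surjection of finite-length modules forces $d_{k-1}\geq d_k$. Consequently
\[
\ell(R/I)=\sum_{k=0}^{r-1}d_k\leq r\,d_0=r\,\ell\bigl(R/(\langle f\rangle+I)\bigr),
\]
which is precisely the inequality \eqref{lengths}.

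For the equality statement I would first observe that equality in the display above is equivalent to $d_k=d_0$ for all $k$, and, in view of the monotone chain $d_0\geq\cdots\geq d_{r-1}$, this is equivalent to the single identity $d_0=d_{r-1}$. It then remains to recast $d_0=d_{r-1}$ as the stated condition on $\ker(\varphi_{f,I})$. The image of $\varphi_{f,I}$ is $(\langle f\rangle+I)/I=J_1/I$, so the exact sequence $0\to\ker(\varphi_{f,I})\to R/I\to J_1/I\to 0$ yields $\ell(\ker(\varphi_{f,I}))=\ell(R/(\langle f\rangle+I))=d_0$. On the other hand $\frac{\langle f^{r-1}\rangle+I}{I}=J_{r-1}/I$ is always contained in $\ker(\varphi_{f,I})$, since $fJ_{r-1}=\langle f^r\rangle+fI\subseteq I$, and its length is $\ell(J_{r-1}/J_r)=d_{r-1}$. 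As one of these finite-length modules is contained in the other, $\ker(\varphi_{f,I})=J_{r-1}/I$ if and only if $d_0=d_{r-1}$, that is, if and only if equality holds in \eqref{lengths}.

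The main obstacle is the monotonicity $d_0\geq d_1\geq\cdots$; everything else is bookkeeping with short exact sequences and additivity of length. The points requiring care are the well-definedness and surjectivity of the induced maps (where $fI\subseteq I$ must be used), and the degenerate case $r=1$, in which $f\in I$, $\varphi_{f,I}=0$ and $\frac{\langle f^{0}\rangle+I}{I}=R/I=\ker(\varphi_{f,I})$, so that the asserted equality indeed holds.
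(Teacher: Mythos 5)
Your proof is correct, and it rests on the same decomposition as the paper's: both interpolate between $I$ and $\langle f\rangle+I$ by the chain of ideals $J_k=\langle f^k\rangle+I$, use additivity of length along this chain, and read off $\ell\big(\ker(\varphi_{f,I})\big)=\ell\left(\frac{R}{\langle f\rangle+I}\right)$ from the four-term exact sequence attached to multiplication by $f$ on $R/I$. Where you genuinely diverge is in the mechanism for the central inequality. The paper bounds each quotient $\ell(J_i/J_{i+1})$ by intersecting with the kernel: from the induced exact sequence $0\to\ker(\varphi)\cap\frac{\langle f^i\rangle+I}{I}\to\frac{\langle f^i\rangle+I}{I}\xrightarrow{\varphi}\frac{\langle f^i\rangle+I}{I}\to\frac{\langle f^i\rangle+I}{\langle f^{i+1}\rangle+I}\to 0$ it gets the identification $\ell(J_i/J_{i+1})=\ell\big(\ker(\varphi)\cap\frac{\langle f^i\rangle+I}{I}\big)\leq\ell(\ker(\varphi))$. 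You instead prove the monotonicity $d_0\geq d_1\geq\cdots\geq d_{r-1}$ by exhibiting surjections $J_{k-1}/J_k\to J_k/J_{k+1}$ induced by multiplication by $f$ (your verification of well-definedness and surjectivity via $fJ_{k-1}+I=J_k$ is sound). Your route is somewhat more elementary -- only surjectivity and additivity of length, no kernel intersections are needed for the inequality -- and it yields the slightly stronger conclusion that the $d_k$ decrease monotonically, which the paper never asserts. The trade-off appears in the equality analysis: the paper's identification of $d_i$ as the length of a submodule of $\ker(\varphi)$ makes the characterization of equality immediate (equality forces $\ker(\varphi)\subseteq\frac{\langle f^{r-1}\rangle+I}{I}$, hence equality of the two by the defining minimality of $r$), whereas you must supply the extra observation -- which you do correctly -- that $\frac{\langle f^{r-1}\rangle+I}{I}\subseteq\ker(\varphi_{f,I})$ always holds, so that the single numerical identity $d_0=d_{r-1}$ forces equality of the nested finite-length modules. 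Both arguments are complete; yours is a clean and valid alternative packaging of the same filtration idea.
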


\begin{proof}
Let $A=R/I$ and $B=R/(\langle f\rangle+I)$. Let $r=r_f(I)$.
Let us consider the following chain of ideals
\begin{equation}\label{chain}
0=\frac{\langle f^r\rangle +I}{I}\subseteq \frac{\langle f^{r-1}\rangle +I}{I} \subseteq \cdots \subseteq
\frac{\langle f^2\rangle +I}{I} \subseteq \frac{\langle f\rangle +I}{I} \subseteq A.
\end{equation}

From (\ref{chain}) it follows that
\begin{equation}\label{lengths1}
\ell\left(\frac{R}{I}\right)=
\sum_{i=0}^{r-1}\ell\left(\frac{\langle f^{i}\rangle+I}{\langle f^{i+1}\rangle+I}\right).
\end{equation}

Let $\varphi=\varphi_{f,I}$.
It is immediate to see that the sequence
\begin{equation}\label{exactabasica}
\xymatrix@C=0.2cm@R=1.5ex{
0 \ar[rr] &&  \ker(\varphi)   \ar[rr]^-{j}  &&  \displaystyle\frac{R}{I} \ar[rr]^-{\varphi}   && \displaystyle\frac{R}{I}    \ar[rr] &&
\displaystyle\frac{R}{\langle f\rangle +I}  \ar[rr] && 0.
}
\end{equation}
is exact. So
\begin{equation}\label{lengths2}
\ell\big(\ker(\varphi)\big)=\ell\left(\frac{R}{\langle f\rangle+I}\right).
\end{equation}
Let us fix any $i\in\{1,\dots, r-1\}$. The sequence \eqref{exactabasica} induces the exact sequence
\begin{equation}\label{exactabasica2}
\xymatrix@C=0.2cm@R=1.5ex{
0 \ar[rr] &&  \ker(\varphi)\cap {\displaystyle\frac{\langle f^i\rangle +I}{I}} \ar[rr]^-{j} &&
\displaystyle\frac{\langle f^i\rangle +I}{I}  \ar[rr]^{\varphi}   && \displaystyle\frac{\langle f^i\rangle +I}{I}    \ar[rr] &&
\displaystyle\frac{\langle f^i\rangle +I}{\langle f^{i+1}\rangle+I}  \ar[rr] && 0.
}
\end{equation}

The exactness of \eqref{exactabasica2} implies that
\begin{equation}\label{lengths3}
\ell\left(\ker(\varphi)\cap \frac{\langle f^i\rangle+I}{I}\right)=\ell\left(\frac{\langle f^i\rangle+I}{\langle f^{i+1}\rangle+I}\right).
\end{equation}

Relations \eqref{lengths2} and \eqref{lengths3} imply that
\begin{equation}\label{lengths4}
\ell\left(\frac{\langle f^i\rangle+I}{\langle f^{i+1}\rangle+I}\right)\leq \ell\left(\frac{R}{\langle f\rangle+I}\right)
\end{equation}
for all $i=1,\dots, r-1$. Hence, by \eqref{lengths1}, we have that
$$
\ell\left(\frac{R}{I}\right)=
\sum_{i=1}^{r-1}\ell\left(\frac{\langle f^{i}\rangle+I}{\langle f^{i+1}\rangle+I}\right)+
\ell\left(\frac{R}{\langle f\rangle+I}\right)\leq r \ell\left(\frac{R}{\langle f\rangle+I}\right)
$$
and thus \eqref{lengths} follows. The above relation shows that
\begin{align*}
\ell\left(\frac{R}{I}\right)=r \ell\left(\frac{R}{\langle f\rangle+I}\right)
&\Longleftrightarrow \ell\big(\ker(\varphi)\big)=\ell\left(\ker(\varphi)\cap \frac{\langle f^i\rangle+I}{I}\right),\,\textnormal{for all $i=1,\dots, r-1$}\\
&\Longleftrightarrow \ker(\varphi)=\ker(\varphi)\cap \frac{\langle f^i\rangle+I}{I},\,\textnormal{for all $i=1,\dots, r-1$}\\
&\Longleftrightarrow \ker(\varphi)\subseteq \frac{\langle f^{r-1}\rangle +I}{I}\Longleftrightarrow \ker(\varphi)= \frac{\langle f^{r-1}\rangle +I}{I},
\end{align*}
where the last equivalence follows as a consequence of the definition of $r$.
\end{proof}

We remark that it is easy to find examples where the analogous inequality to \eqref{lengths} obtained when replacing
the ideal $\langle f\rangle$ by an arbitrary ideal does not hold in general.
As an immediate application of the previous theorem we have the following result.

\begin{cor}\label{boundmutau}
Let $X$ be an analytic subvariety of $(\C^n,0)$. Let $f\in\O_n$ such that $\mu_X(f)<\infty$. Then
\begin{equation}\label{mutau}
\frac{\mu_X(f)}{\tau_X(f)}\leq r_{f}(J_X(f))
\end{equation}
and equality holds if and only if $\ker(\varphi_{f,J_X(f)})=\frac{\langle f^{r-1}\rangle+J_X(f)}{J_X(f)}$, where $r=r_f(J_X(f))$.
\end{cor}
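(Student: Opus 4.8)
The plan is to obtain this corollary as a direct specialization of Theorem \ref{boundquot}, taking $R=\O_n$, $\m=\m_n$, $I=J_X(f)$, and $f$ itself as the distinguished element. The first step is to record the dictionary between the quantities of the corollary and the lengths appearing in Theorem \ref{boundquot}: by the definitions \eqref{defnBR} and \eqref{defnBRT} one has $\ell(\O_n/J_X(f))=\dim_\C(\O_n/J_X(f))=\mu_X(f)$ and $\ell(\O_n/(\langle f\rangle+J_X(f)))=\tau_X(f)$. Under this identification the inequality \eqref{lengths} becomes verbatim the inequality \eqref{mutau}, and the equality criterion of Theorem \ref{boundquot} specialized to $I=J_X(f)$ is precisely the stated condition on $\ker(\varphi_{f,J_X(f)})$.

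What remains is to confirm that the three hypotheses of Theorem \ref{boundquot} hold in this setting. The ring $\O_n$ is Noetherian local with maximal ideal $\m_n$, so the first hypothesis is automatic. The ideal $J_X(f)$ has finite colength, since this is exactly the assumption $\mu_X(f)<\infty$. The only point demanding a genuine (if brief) verification is the remaining hypothesis $r_f(J_X(f))<\infty$, that is, that some positive power of $f$ lies in $J_X(f)$.

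For this last point I would argue as follows. An ideal of finite colength in a Noetherian local ring is $\m_n$-primary, hence $\rad(J_X(f))=\m_n$. As $f:(\C^n,0)\to(\C,0)$ satisfies $f(0)=0$, we have $f\in\m_n=\rad(J_X(f))$, so $f^r\in J_X(f)$ for some $r\geq 1$ and therefore $r_f(J_X(f))<\infty$. Once these checks are in place, Theorem \ref{boundquot} applies directly and delivers both \eqref{mutau} and its equality characterization. I do not anticipate any real obstacle: all the substantive work is carried out in Theorem \ref{boundquot}, and the corollary merely reinterprets that conclusion through the definitions of the Bruce-Roberts Milnor and Tjurina numbers, the sole nontrivial ingredient being the $\m_n$-primary property of $J_X(f)$ that guarantees the finiteness of $r_f(J_X(f))$.
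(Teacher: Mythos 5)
Your proposal is correct and takes essentially the same route as the paper, which presents the corollary as an immediate specialization of Theorem \ref{boundquot} with $R=\O_n$, $\m=\m_n$ and $I=J_X(f)$. Your explicit check that $r_f(J_X(f))<\infty$ (finite colength makes $J_X(f)$ an $\m_n$-primary ideal, so $f\in\m_n\subseteq\rad(J_X(f))$ and some power $f^r$ lies in $J_X(f)$) is precisely the point the paper leaves implicit, and it is sound.
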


\begin{cor}
Let $w\in\Z^n_{\geq 1}$ and let
$h=(h_1,\dots, h_p):(\C^n,0)\to (\C^p,0)$ be a weighted homogeneous \textsc{icis}
with respect to $w$, $p\leq n-1$.
Let $f\in\O_n$ such that $\mu_X(f)<\infty$.
Then the map $(f,h_1,\dots, h_p)$ is also an \textsc{icis} and
\begin{equation}\label{mudeh}
\mu(h)\leq (r-1)\mu(f,h)
\end{equation}
where $r=r_{\pi(\theta_w(f))}\left(\pi(J(f,h_1,\dots, h_p))\right)$ and $\pi$ denotes the natural projection $\O_n\to \frac{\O_n}{\langle h_1,\dots, h_p\rangle}$. Moreover, if $R$ denotes the ring $\O_n/\big(\langle h_1,\dots, h_p\rangle+ \J(f,h_1,\dots, h_p)\big)$, then
equality holds in \eqref{mudeh} if and only if the kernel of the
automorphism of $R$ defined by multiplication by $\theta_w(f)$ is equal to
the ideal generated by the image of $\theta_w(f)^{r-1}$ in $R$.

\end{cor}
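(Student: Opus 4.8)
The plan is to read the Corollary as Theorem \ref{boundquot} applied in the quotient ring $R'=\O_n/\langle h_1,\dots,h_p\rangle$. First I would record what $\mu_X(f)<\infty$ buys us: by Proposition \ref{cfh} it gives $c(f,h)<\infty$, hence $(f,h_1,\dots,h_p)$ is an \icis, and the L\^e--Greuel formula noted just after Proposition \ref{cfh} yields $\mu(h)+\mu(f,h)=c(f,h)$. Inside $R'$ I then set $\bar I=\pi(\J(f,h_1,\dots,h_p))$ and $\bar f=\pi(\theta_w(f))$, so that the ring $R$ of the statement is precisely $R'/\bar I$ and $r=r_{\bar f}(\bar I)$ is the integer in the hypothesis.

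Next I would check that $(R',\bar I,\bar f)$ meets the hypotheses of Theorem \ref{boundquot}. The ideal $\bar I$ has finite colength because $\ell(R'/\bar I)=\dim_\C\O_n/(\langle h_1,\dots,h_p\rangle+\J(f,h_1,\dots,h_p))=c(f,h)<\infty$; and $r_{\bar f}(\bar I)<\infty$ because $\theta_w(f)=\sum_i w_ix_i\frac{\partial f}{\partial x_i}$ vanishes at $0$, so $\bar f$ lies in the maximal ideal of the Artinian local ring $R'/\bar I$ and is therefore nilpotent. (That $R'/\bar I$ is nonzero, i.e. $\mu(f,h)\geq 1$, follows since the relevant minors vanish at the weighted-homogeneous singularity, so $\bar I$ lies in the maximal ideal of $R'$.)

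Then I would identify the two colengths appearing in \eqref{lengths}. The numerator is $\ell(R'/\bar I)=c(f,h)=\mu(h)+\mu(f,h)$, by the previous step. The denominator is $\ell(R'/(\langle\bar f\rangle+\bar I))=\dim_\C\O_n/(\langle\theta_w(f),h_1,\dots,h_p\rangle+\J(f,h_1,\dots,h_p))$, which equals $\mu(f,h)$ by Theorem \ref{milnorfh}. Substituting into Theorem \ref{boundquot} and clearing the denominator gives $c(f,h)\leq r\,\mu(f,h)$; replacing $c(f,h)$ by $\mu(h)+\mu(f,h)$ and subtracting $\mu(f,h)$ from both sides yields $\mu(h)\leq(r-1)\mu(f,h)$, which is \eqref{mudeh}.

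Finally, the equality clause transfers directly. After the same rearrangement, equality in \eqref{mudeh} is equivalent to equality in \eqref{lengths}, which by Theorem \ref{boundquot} holds exactly when $\ker(\varphi_{\bar f,\bar I})=\frac{\langle\bar f^{\,r-1}\rangle+\bar I}{\bar I}$; here $\varphi_{\bar f,\bar I}$ is multiplication by the image of $\theta_w(f)$ on $R=R'/\bar I$, and $\frac{\langle\bar f^{\,r-1}\rangle+\bar I}{\bar I}$ is the ideal of $R$ generated by the image of $\theta_w(f)^{r-1}$, matching the stated condition. The only genuinely delicate point is the bookkeeping between $\O_n$ and $R'$: one must see that Theorem \ref{milnorfh} supplies exactly the denominator colength $\mu(f,h)$ and the L\^e--Greuel formula the numerator; once these two identifications are in place the inequality and its equality case are purely formal consequences of Theorem \ref{boundquot}.
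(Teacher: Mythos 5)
Your proof is correct and takes essentially the same route as the paper: apply Theorem \ref{boundquot} in the quotient ring $\O_n/\langle h_1,\dots,h_p\rangle$ with $I=\pi(\J(f,h_1,\dots,h_p))$ and $\pi(\theta_w(f))$ in the role of $f$, identifying the numerator colength with $c(f,h)=\mu(h)+\mu(f,h)$ via Proposition \ref{cfh} and the L\^e--Greuel formula, and the denominator with $\mu(f,h)$ via Theorem \ref{milnorfh}, after which the inequality and its equality case are formal. Your explicit check that $r_{\pi(\theta_w(f))}\left(\pi(\J(f,h_1,\dots,h_p))\right)<\infty$ (nilpotency of $\pi(\theta_w(f))$ in the Artinian quotient) fills in a hypothesis the paper leaves implicit, but it does not alter the approach.
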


\begin{proof}
By Theorem \ref{milnorfh}
we know that $(f,h_1,\dots, h_p)$ is an \textsc{icis} whose Milnor number is equal to the colength of the ideal
$\pi(\langle \theta_w(f)\rangle+\J(f,h_1,\dots, h_p))$ in $\frac{\O_n}{\langle h_1,\dots, h_p\rangle}$.
By Proposition \ref{cfh}
we also know that the number
$c(f,h)$ is finite. Let us recall that $c(f,h)$ is equal to the colength of $\pi(\J(f,h_1,\dots, h_p))$.
Therefore, by Theorem \ref{boundquot} and the Lê-Greuel formula, we obtain that
$$
\frac{\mu(h)+\mu(f,h)}{\mu(f,h)}=\frac{c(f,h)}{\mu(f,h)}\leq r,
$$
which is equivalent to saying that $\mu(h)\leq (r-1)\mu(f,h)$. The characterization of equality in (\ref{mudeh}) is a direct application of
Theorem \eqref{boundquot} .
\end{proof}


The bound given in \eqref{boundmutau} is sharp, as the following example shows.

\begin{ex}
Let $h\in \O_2$ be the polynomial given by $h(x,y)=xy^6+x^4y^4+x^{10}$ and let $X=h^{-1}(0)$.
Hence $\Theta_X=\langle (-2x^4y^3, 5y^6+2x^3y^4+5x^9),(2x, 3y)\rangle$.
Let us consider the function
$f(x,y)=x+y$. We have $\mu_X(f)=6$ and $\tau_X(f)=1$. Moreover $r_f(J_X(f))=6$. This shows that in this example
equality holds in \eqref{mutau}.
\end{ex}


\section{Derlog and lowerable vector fields}\label{Lowandlift}

Given an integer $i\in\{1,\dots, n\}$, we denote by $\pi_i$ the projection $\C^n\to \C^{i}$ onto the first $i$ coordinates.
Let $\id_{\C^n}$ be the identity map $\C^n\to \C^n$.
We denote by $L_{i,n}$ the set of linear maps $p:\C^i\to\C^n$ such that $\pi_i\circ p=\id_{\C^i}$, that is, of the form
$$
p(x_1,\dots, x_i)=\big(x_1,\dots, x_i, \ell_{i+1}(x_1,\dots, x_i), \dots, \ell_{n}(x_1,\dots, x_i)\big),
$$
where $\ell_{i+1},\dots,\ell_n$ denote linear forms of $\C[x_1,\dots, x_n]$.
If $1\leq i\leq n-1$, then the set $L_{i,n}$
can be identified with the set of matrices of size $(n-i)\times i$ with entries in $\C$.

Let $X\subseteq(\C^n,0)$ be an analytic subvariety, $n\geq 2$, and let $p\in L_{i,n}$, where $i\in\{1,\dots, n-1\}$.
The aim of this section is to obtain information about $\Theta_{p^{-1}(X)}$ in terms of $p$ and $\Theta_X$.

\begin{defn}\label{lowerables}
Let $p:\C^{i}\to \C^n$ be a linear map, where $i\in\{1,\dots, n\}$, and let $X$ be an analytic subvariety of $(\C^n,0)$. We define
$$
\Low_X(p)=\big\{\theta\in\O_i^i: Dp\circ\theta= \eta\circ p,\,\, \textnormal{for some $\eta\in\Theta_X$}\big\},
$$
where $Dp$ denotes the differential of $p$.
The elements of $\Low_X(p)$ are also known as {\it lowerable vector fields with respect to $p$ and $X$}.

If $\eta\in\Theta_X$ verifies that there exists some $\theta\in\O_i^i$ such that $Dp\circ\theta= \eta\circ p$, then we
say that $\eta$ is {\it liftable with respect to $p$}.
Let us denote by $\Lif_X(p)$ the set of such vector fields.
Let us remark that $\Low_X(p)$ is an $\O_i$-submodule of $\O_i^i$ and
$\Lif_X(p)$ is an $\O_n$-submodule of $\O_n^n$.
\end{defn}

Let us fix a map $p\in L_{i,n}$, for some $i\in\{1,\dots, n\}$, and let $J(p)$ denote the Jacobian module of $p$, that is,
$J(p)=\langle \frac{\partial p}{\partial x_1},\dots,\frac{\partial p}{\partial x_i} \rangle\subseteq\O_i^n$.
By abuse of notation, let us also denote by $\pi_i$ the projection $\O_n^n\to \O_n^i$ onto the first $i$ components.
Let $p^*(\Theta_X)=\{\eta\circ p: \eta\in\Theta_X\}\subseteq\O_i^n$.
An elementary computation shows that
\begin{align}
\Lif_X(p)&=\big\{\eta\in\Theta_X: p\big(\pi_i(\eta \circ p)\big)=\eta \circ p \big\}  \label{simplelift} \\
\Low_X(p)&=\big\{\pi_{i}(\eta \circ p): \eta \in\Lif_X(p)\big\}=\pi_i\big(p^*(\Theta_X)\cap J(p)\big) \label{simplelow} .
\end{align}

Given a map $p:\C^i\to \C^n$ and an analytic subvariety $X\subseteq (\C^n,0)$, then $p$ is said to be {\it algebraically transverse to $X$ off $0$}
when there exists an open neighbourhood $U$ of $0$ in $\C^n$ such that
\begin{equation}\label{transv1}
Dp(T_x\C^i)+\Theta_X(p(x))=T_{p(x)}\C^n
\end{equation}
for all $x\in U\setminus\{0\}$.
We will denote this condition by $p\algt X$. We recall that $p\algt X$ if and only if $p$ is finitely $\mathcal K_X$-determined (see \cite[p.\,9]{DamonMemoirsAMS}). Let us remark that if $p$ is an immersion, then relation (\ref{transv1}) holds only if $\dim_\C \Theta_X(p(x))\geq n-i$.
Here we recall a result from \cite{DamonMemoirsAMS} relating the modules $\Theta_{p^{-1}(X)}$ and $\Low_X(p)$.

\begin{thm}\label{LX} \cite[p.\,17]{DamonMemoirsAMS}
Let $X$ be an analytic subvariety of $(\C^n,0)$ and let $p:\C^i\to \C^n$ be a map such that $p\algt X$.
Then there exists some $k\geq 1$ such that
\begin{equation}\label{Damonincl}
\m_i^k\Theta_{p^{-1}(X)}\subseteq \Low_X(p)\subseteq \Theta_{p^{-1}(X)}.
\end{equation}
\end{thm}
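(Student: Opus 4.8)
The plan is to prove the right-hand inclusion by a direct chain-rule computation, valid with no transversality hypothesis, and the left-hand inclusion by showing that the coherent quotient $\Theta_{p^{-1}(X)}/\Low_X(p)$ is supported only at the origin, so that some power of $\m_i$ annihilates it. Write $Y=p^{-1}(X)$ and $I(X)=\langle h_1,\dots,h_m\rangle$. Given $\theta\in\Low_X(p)$, choose $\eta\in\Theta_X$ with $Dp\circ\theta=\eta\circ p$. For each $j$ the chain rule gives
$$\theta(h_j\circ p)(x)=d(h_j)_{p(x)}\big((Dp\circ\theta)(x)\big)=d(h_j)_{p(x)}\big(\eta(p(x))\big)=\big(\eta(h_j)\circ p\big)(x).$$
Since $\eta\in\Theta_X$ we have $\eta(h_j)\in I(X)$, so $\eta(h_j)\circ p$ lies in the ideal $J_0:=p^*(I(X))\O_i$; by the Leibniz rule $\theta$ then maps $J_0$ into itself. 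As $\O_i$ is a $\Q$-algebra, a derivation stabilizing an ideal stabilizes its radical, and $\sqrt{J_0}=I(Y)$ by the analytic Nullstellensatz; hence $\theta(I(Y))\subseteq I(Y)$, i.e. $\theta\in\Theta_Y$. This proves $\Low_X(p)\subseteq\Theta_{p^{-1}(X)}$.

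For the other inclusion, note that Definition \ref{lowerables} together with $p^*(\Theta_X)=\{\eta\circ p\}$ yields, with no restriction on $p$,
$$\Low_X(p)=\big\{\theta\in\O_i^i: Dp\circ\theta\in p^*(\Theta_X)\big\}=\ker\Psi,\qquad \Psi:\O_i^i\to\frac{\O_i^n}{p^*(\Theta_X)},\ \ \theta\mapsto[Dp\circ\theta].$$
Both $\Theta_Y$ and $\Low_X(p)$ are coherent $\O_i$-modules (the latter as the kernel of a morphism of coherent sheaves), so $\Theta_Y/\Low_X(p)$ is coherent. In view of the first inclusion it therefore suffices to show that this quotient has support $\{0\}$: a coherent $\O_i$-module supported at the origin has finite length and is annihilated by some $\m_i^k$, which is precisely \eqref{Damonincl}.

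It remains to check $\Theta_{Y,x}=\Low_X(p)_x$ for every $x\neq0$ near $0$; set $y=p(x)$. If $y\notin X$ then $\Theta_{X,y}=\O_{n,y}^n$ and both stalks equal $\O_{i,x}^i$. If $y\in X$, transversality \eqref{transv1} gives $Dp(T_x\C^i)+\Theta_X(y)=\C^n$, so the cokernel of the evaluation map $\O_{i,x}^i\oplus p^*(\Theta_X)_x\to\O_{i,x}^n$, $(\theta,s)\mapsto Dp\circ\theta+s$, is finitely generated and vanishes after tensoring with $\C$; Nakayama then yields $J(p)_x+p^*(\Theta_X)_x=\O_{i,x}^n$, whence $\Psi_x$ is surjective. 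By the first inclusion $\Low_X(p)_x\subseteq\Theta_{Y,x}$, so equality of stalks is equivalent to the lifting property $\Psi_x(\Theta_{Y,x})=0$, that is, every germ $\theta$ tangent to $Y$ at $x$ must satisfy $Dp\circ\theta\in p^*(\Theta_X)_x$. I expect this lifting to be the main obstacle. Transversality should guarantee that near $x$ the pullback $p^*(I(X))\O_{i,x}$ is already reduced, so that $I(Y)_x=p^*(I(X))\O_{i,x}$ and the logarithmic strata of $Y$ are the transverse slices $p^{-1}(X_\alpha)$; writing $\C^n=Dp(\C^i)\oplus W$ with $W$ a complement furnished by $Dp(\C^i)+\Theta_X(y)=\C^n$, one extends a field tangent to such a slice across the $W$-directions to a field $\eta$ tangent to $X_\alpha$ and verifies $\eta\in\Theta_{X,y}$ by checking that it preserves $I(X)_y$. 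Making this extension $\O$-linear in $\theta$ and compatible across the finitely many strata—which is where transversality to all logarithmic strata, not merely the open one, is indispensable—completes the stalk equality. Granting it, $\Theta_Y/\Low_X(p)$ is coherent with support $\{0\}$, hence of finite length, and $\m_i^k\Theta_{p^{-1}(X)}\subseteq\Low_X(p)$ follows for some $k\geq1$.
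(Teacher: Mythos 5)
Your proof of the right-hand inclusion $\Low_X(p)\subseteq\Theta_{p^{-1}(X)}$ is correct and complete: the chain rule gives $\theta(h_j\circ p)=\eta(h_j)\circ p$, Leibniz shows $\theta$ preserves the ideal $J_0=p^*(I(X))\O_i$, Seidenberg's theorem (in a Noetherian $\Q$-algebra a derivation preserving an ideal preserves its radical) upgrades this to $I(Y)=\sqrt{J_0}$, and R\"uckert's Nullstellensatz identifies $\sqrt{J_0}$ with $I(p^{-1}(X))$. This is in fact more general than what the paper itself establishes: Proposition \ref{inclLow} assumes $h$ and $h\circ p$ reduced precisely so as to avoid the radical-preservation step, and your argument removes that hypothesis. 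Bear in mind that the paper offers no proof of Theorem \ref{LX} at all (it is quoted from Damon), so your argument must stand on its own. One small point to justify: identifying $\Low_X(p)$ with $\ker\Psi$ tacitly replaces ``$Dp\circ\theta=\eta\circ p$ for a single $\eta\in\Theta_X$'' by ``$Dp\circ\theta$ lies in the $\O_i$-module generated by $p^*(\Theta_X)$''; these agree when $p^*:\O_n\to\O_i$ is surjective (e.g.\ $p\in L_{i,n}$, or any embedding), but not obviously for an arbitrary map germ $p$ as in the statement, and this identification is also what makes $\Low_X(p)$ coherent.

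The genuine gap is in the left-hand inclusion. Your reduction—coherent quotient $\Theta_{p^{-1}(X)}/\Low_X(p)$ supported at the origin, hence killed by some $\m_i^k$—is the right skeleton, but the entire content of Damon's theorem is exactly the stalk equality $\Theta_{p^{-1}(X),x}=\Low_X(p)_x$ for $x\neq0$, i.e.\ that algebraic transversality at $x$ forces every germ tangent to $p^{-1}(X)$ to be lowerable. You do not prove this: you write ``I expect this lifting to be the main obstacle'' and ``Granting it,'' and the sketch offered would not work as stated. Concretely: (i) a vector field tangent to a single logarithmic stratum $X_\alpha$ need not preserve $I(X)$—tangency to one stratum is far weaker than being logarithmic—so your extension ``across the $W$-directions'' has no reason to land in $\Theta_{X,y}$, and verifying ideal-preservation is precisely the unproved point; (ii) ``the finitely many strata'' presupposes that $X$ is holonomic, which is not a hypothesis of the theorem; (iii) the claim that transversality makes $p^*(I(X))\O_{i,x}$ reduced is also asserted, not proved. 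The missing idea (Damon's) is to use transversality to pick $\eta_1,\dots,\eta_s\in\Theta_{X,y}$ whose values at $y$ span a complement of $Dp(T_x\C^i)$ and to integrate them: by Saito's triviality lemma, a logarithmic field non-vanishing at $y$ can be straightened to $\partial/\partial t$ with $(X,y)\cong(X'\times\C)$ and $\Theta_X\cong\O\,\partial_t\oplus\O\,\Theta_{X'}$; iterating yields a local product $(X,y)\cong(X_0\times\C^s,(y_0,0))$ relative to which the lifting of fields tangent to $p^{-1}(X)$ becomes explicit (alternatively, a preparation-theorem argument exploiting the surjectivity $J(p)_x+p^*(\Theta_X)_x=\O_{i,x}^n$, which you did establish via Nakayama, can be made to work). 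As written, you have reduced the theorem to its hardest step and then assumed that step, so the proof is incomplete.
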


The following example shows that the second inclusion of (\ref{Damonincl}) can be strict. In Proposition \ref{inclLow} we give a sufficient condition for the inclusion $\Low_X(p)\subseteq \Theta_{p^{-1}(X)}$ to hold without imposing the condition $p\algt X$.

\begin{ex}
Let us consider the function $h\in\O_2$ given by $h(x,y)=x^3y^2+x^2y^3+x^6+y^6$ and let $X=h^{-1}(0)$.
We observe that $X$ is a plane curve with an isolated singularity at the origin.
Let us consider the immersive linear map $p:\C\to \C^2$ given by $p(x)=(x,x)$, for all $x\in \C$.
Hence $h(p(x))=2x^5(1+x)$, for all $x\in\C$, which implies that $p^{-1}(X)=\{0\}$, as germs at $0$.
In particular, there exists an open neighbourhood $U$ of $0\in\C$ such that $h(p(x))\neq 0$, for all $x\in U\setminus\{0\}$. Therefore, the dimension of $\Theta_X(p(x))$ as a complex vector space is $2$, for all $x\in U\setminus\{0\}$. This shows that $p\algt X$.
A basic computation with {\it Singular} \cite{Singular} shows that $\Low_X(p)=\pi_1(p^*(\Theta_X)\cap J(p))=\langle x^3\rangle$, whereas
$\Theta_{p^{-1}(X)}=\langle x\rangle$. That is, $\Low_X(p)\subsetneq \Theta_{p^{-1}(X)}$.
\end{ex}

Let $h:(\C^n,0)\to (\C^m,0)$ be an analytic map. We say that $h$ is {\it reduced} when the ideal of $\O_n$
generated by the components of $h$ is reduced.


\begin{prop}\label{inclLow}
Let $X$ be an analytic subvariety of $(\C^n,0)$, $n\geq 2$, and let $i\in\{1,\dots, n-1\}$. Let $h:(\C^n,0)\to (\C^m,0)$ be a reduced analytic map
such that $X=h^{-1}(0)$ and let $p\in L_{i,n}$ such that the map $h\circ p:(\C^i,0)\to (\C^n,0)$ is also reduced. Then
$$
\Low_X(p)\subseteq \Theta_{p^{-1}(X)}.
$$
\end{prop}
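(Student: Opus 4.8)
The plan is to reduce the inclusion to a single chain-rule identity comparing the action of a lowerable field on the pulled-back equations with the action of its lift on the original equations. First I would fix notation, writing $p(x)=(x_1,\dots,x_i,\ell_{i+1}(x),\dots,\ell_n(x))$ with $x=(x_1,\dots,x_i)$, and using $y_1,\dots,y_n$ for the coordinates on the target $\C^n$. Since $h$ is reduced we have $I(X)=\langle h_1,\dots,h_m\rangle$, and since $p^{-1}(X)=(h\circ p)^{-1}(0)$ with $h\circ p$ reduced, we have $I(p^{-1}(X))=\langle h_1\circ p,\dots,h_m\circ p\rangle$ in $\O_i$. Consequently, proving $\theta\in\Theta_{p^{-1}(X)}$ for a given $\theta\in\Low_X(p)$ amounts to showing that $\theta(h_j\circ p)\in\langle h_1\circ p,\dots,h_m\circ p\rangle$ for every $j\in\{1,\dots,m\}$.

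Next I would unwind the definition of $\Low_X(p)$: for $\theta=(\theta_1,\dots,\theta_i)\in\Low_X(p)$ there is some $\eta=(\eta_1,\dots,\eta_n)\in\Theta_X$ with $Dp\circ\theta=\eta\circ p$, which read off componentwise gives the $n$ identities $\sum_{k=1}^i\theta_k\,\frac{\partial p_l}{\partial x_k}=\eta_l\circ p$ in $\O_i$, for $l=1,\dots,n$. The crux of the argument is then the computation
$$\theta(h_j\circ p)=\sum_{k=1}^i\theta_k\,\frac{\partial(h_j\circ p)}{\partial x_k}=\sum_{l=1}^n\Bigl(\frac{\partial h_j}{\partial y_l}\circ p\Bigr)\sum_{k=1}^i\theta_k\,\frac{\partial p_l}{\partial x_k}=\sum_{l=1}^n\Bigl(\frac{\partial h_j}{\partial y_l}\circ p\Bigr)(\eta_l\circ p)=\bigl(\eta(h_j)\bigr)\circ p,$$
where the second equality is the chain rule and the third substitutes the componentwise lowerability identities.

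With the identity $\theta(h_j\circ p)=(\eta(h_j))\circ p$ in hand the conclusion is immediate. Because $\eta\in\Theta_X$ and $I(X)=\langle h_1,\dots,h_m\rangle$, for each $j$ there exist $a_1,\dots,a_m\in\O_n$ with $\eta(h_j)=\sum_{s=1}^m a_s h_s$; composing with $p$ gives $\theta(h_j\circ p)=\sum_{s=1}^m(a_s\circ p)(h_s\circ p)$, which lies in $\langle h_1\circ p,\dots,h_m\circ p\rangle=I(p^{-1}(X))$. Hence $\theta$ is tangent to $p^{-1}(X)$, proving $\Low_X(p)\subseteq\Theta_{p^{-1}(X)}$.

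I do not anticipate a serious technical obstacle, since once the lowerability condition is written componentwise the whole computation is a direct application of the chain rule. The genuinely load-bearing point --- and the reason both reducedness hypotheses are imposed --- is that reducedness is exactly what lets me pass back and forth between the geometric tangency modules $\Theta_X$, $\Theta_{p^{-1}(X)}$ and the concrete ideals $\langle h_1,\dots,h_m\rangle$, $\langle h_1\circ p,\dots,h_m\circ p\rangle$; in particular, without reducedness of $h\circ p$ the inclusion $\langle h_1\circ p,\dots,h_m\circ p\rangle\subseteq I(p^{-1}(X))$ could be strict, and the final membership would no longer yield tangency to $p^{-1}(X)$.
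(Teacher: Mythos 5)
Your proposal is correct and follows essentially the same route as the paper: both proofs use the reducedness hypotheses to identify $I(X)$ and $I(p^{-1}(X))$ with the ideals generated by the $h_j$ and the $h_j\circ p$ respectively, and then establish the key identity $\theta(h_j\circ p)=\bigl(\eta(h_j)\bigr)\circ p$ via the chain rule and the componentwise lowerability relations. The only cosmetic difference is that the paper expands the computation using the explicit linear form of $p\in L_{i,n}$, whereas you phrase it through the general chain rule; the content is identical.
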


\begin{proof}
Let $J$ be the ideal of $\O_n$ generated by the components of $h$ and let $\theta\in \Low_X(p)$, $\theta=(\theta_1,\dots, \theta_i)$. By relations (\ref{simplelift}) and (\ref{simplelow}) it follows that there exists some $\eta\in \Theta_X$ such that $p(\pi_i(\eta \circ p))=\eta \circ p$ and $\theta=\pi_i(\eta\circ p)$.

Let $p^*:\O_{n}\to \O_{i}$ be the morphism given by $p^*(f)=f\circ p$, for all $f\in\O_n$.
We have that $I(p^{-1}(X))=I((h\circ p)^{-1}(0))=
\rad(p^*(J))=p^*(J)$. We will see that $\theta(h_k\circ p)\in p^*(J)$, for all $k=1,\dots, m$, where $h=(h_1,\dots, h_m)$.


Let us write $p$ as $p(x_1,\dots, x_i)=(x_1,\dots, x_i, \sum_{j=1}^i a_{i+1,j}x_j, \dots,\sum_{j=1}^i a_{n,j}x_j)$,
for some coefficients $a_{\ell,j}\in \C$. Let us fix an index $k\in\{1,\dots, m\}$.
Computing $\theta(h_k\circ p)$ we obtain the following:
\begin{align*}
\theta(h_k\circ p)&=\sum_{j=1}^{i}\theta_j\frac{\partial (h_k\circ p)}{\partial x_j}
=\sum_{j=1}^{i}\theta_j\left(\frac{\partial h_k}{\partial x_j}\circ p+\sum_{\ell=i+1}^n a_{\ell, j}\frac{\partial h_k}{\partial x_\ell}\circ p\right)\\
&=\sum_{j=1}^{i}\theta_j\left(\frac{\partial h_k}{\partial x_j}\circ p\right)+\sum_{j=1}^i \theta_j\left(\sum_{\ell=i+1}^n a_{\ell, j}\frac{\partial h_k}{\partial x_\ell}\circ p\right)\\
&=\sum_{j=1}^{i}(\eta_j\circ p)\left(\frac{\partial h_k}{\partial x_j}\circ p\right)+
\sum_{\ell=i+1}^n \left( \sum_{j=1}^i \theta_j a_{\ell, j}\right)\frac{\partial h_k}{\partial x_\ell}\circ p\\
&=\sum_{j=1}^{n}(\eta_j\circ p)\left(\frac{\partial h_k}{\partial x_j}\circ p\right)=\eta(h_k)\circ p\in p^*(J).
\end{align*}
Therefore the inclusion $\Low_X(p)\subseteq \Theta_{p^{-1}(X)}$ holds.
\end{proof}

Let $n\in\Z_{\geq 1}$ and let us fix coordinates $x_1,\dots,x_n$ in $\C^n$.
Then we denote by $\theta^{(n)}$ the Euler derivation $x_1\frac{\partial }{\partial x_1}+\cdots+x_n\frac{\partial }{\partial x_n}$.
In the next result we show a case where the equality $\Low_X(p)=\Theta_{p^{-1}(X)}$ holds.

\begin{prop}\label{LowX}
Let $h:(\C^n,0)\to (\C^m,0)$ be a homogeneous \textsc{icis} such that $n-m\geq 1$ and let $X=h^{-1}(0)$.
Let $i\in\{m+1,\dots,n\}$ and let $p:\C^i\to\C^n$ be an immersive linear map such that $h\circ p:(\C^i,0)\to (\C^m,0)$ is an
\textsc{icis} of positive dimension. Then
\begin{equation}\label{LowandDer}
\Low_X(p)=\Theta_{p^{-1}(X)}.
\end{equation}
\end{prop}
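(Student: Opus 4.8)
The plan is to prove the two inclusions separately. The inclusion $\Low_X(p)\subseteq\Theta_{p^{-1}(X)}$ comes for free: since $h$ is an \textsc{icis} with $n-m\geq 1$ and $h\circ p$ is an \textsc{icis} of positive dimension ($i-m\geq 1$), the ideals $\langle h_1,\dots,h_m\rangle$ and $\langle h_1\circ p,\dots,h_m\circ p\rangle$ are reduced by the remark following Theorem \ref{Xicisqh}, so $h$ and $h\circ p$ are reduced maps. The chain-rule computation carried out in the proof of Proposition \ref{inclLow} uses only these two reducedness facts and applies verbatim to any immersive linear map $p$, giving $\Low_X(p)\subseteq\Theta_{p^{-1}(X)}$. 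The real content is the reverse inclusion. Since $h\circ p$ is a homogeneous \textsc{icis} (with respect to $w=(1,\dots,1)$), Theorem \ref{Xicisqh} produces an explicit $\O_i$-generating set of $\Theta_{p^{-1}(X)}$, and because $\Low_X(p)$ is an $\O_i$-module by Definition \ref{lowerables}, it suffices to check that each generator is lowerable.

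By Theorem \ref{Xicisqh}, $\Theta_{p^{-1}(X)}$ is generated by the Euler field $\theta^{(i)}$, the derivations $(h_j\circ p)\frac{\partial}{\partial x_\ell}$ with $j=1,\dots,m$ and $\ell=1,\dots,i$, and the minor derivations $\delta_K$ attached to the $(m+1)\times(m+1)$ minors of the matrix whose first row is the operator row and whose lower block is the Jacobian of $h\circ p$. The first two families lift at once. Write $P=Dp$ for the constant injective $n\times i$ matrix of $p$. The value of the Euler field of $\C^n$ at $p(x)$ is $p(x)=Px$, so $\theta^{(n)}\circ p=Dp\circ\theta^{(i)}$ and $\theta^{(i)}$ is lowered from $\theta^{(n)}\in\Theta_X$. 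Likewise, if $v$ denotes the $\ell$-th column of $P$, then $\eta:=h_j\,v=\sum_k h_j v_k\frac{\partial}{\partial y_k}$ is a $\C$-linear combination of the generators $h_j\frac{\partial}{\partial y_k}$ of $\Theta_X$, and $\eta\circ p=(h_j\circ p)\,v=Dp\circ\big((h_j\circ p)\frac{\partial}{\partial x_\ell}\big)$, so this generator is lowerable as well.

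The heart of the argument is the minor derivations. Fix an $(m+1)$-element subset $K\subseteq\{1,\dots,i\}$ and let $\delta_K$ be the corresponding generator; its coordinate vector consists of the signed maximal minors of the columns indexed by $K$ of $D(h\circ p)$, which by the chain rule equals $(Dh\circ p)\,P$. Expanding the $(m+1)\times(m+1)$ determinant that computes the $k$-th component of $Dp\circ\delta_K$ along its first row, one identifies this component with $\det\big(E_k\,P_K\big)$, where $E_k$ is the $(m+1)\times n$ matrix whose first row is the $k$-th coordinate covector and whose lower block is $Dh\circ p$, and $P_K$ is the $n\times(m+1)$ submatrix of $P$ on the columns $K$. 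The Cauchy--Binet formula expands this as $\sum_L \det(P_K^L)\,(\tilde\delta_L\circ p)_k$, the sum running over the $(m+1)$-subsets $L\subseteq\{1,\dots,n\}$, where $P_K^L$ is the square submatrix of $P_K$ on the rows $L$ and $\tilde\delta_L\in\Theta_X$ is the minor derivation on $\C^n$ attached to the columns $L$, again supplied by Theorem \ref{Xicisqh}. Since the scalars $\det(P_K^L)$ are constant, this identity says precisely that $Dp\circ\delta_K=\eta\circ p$ with $\eta:=\sum_L\det(P_K^L)\,\tilde\delta_L\in\Theta_X$, whence $\delta_K\in\Low_X(p)$.

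Having shown that all three families of generators lie in $\Low_X(p)$, and that $\Low_X(p)$ is an $\O_i$-module, we obtain $\Theta_{p^{-1}(X)}\subseteq\Low_X(p)$; combined with the first inclusion this yields \eqref{LowandDer}. The genuinely delicate point is the minor-derivation step: everything rests on the Cauchy--Binet identity matching the sign conventions in the first-row expansion of $\delta_K$ with those built into the generators $\tilde\delta_L$ of $\Theta_X$, so that the lift $\eta$ is an honest $\C$-linear combination of the minor generators of $\Theta_X$ with the minors of $P$ as coefficients. I expect the sign bookkeeping, together with checking that the same identity holds uniformly in all $n$ components $k$ (including the indices $k>i$, where the first row of $E_k$ is not a column of $P_K$), to be the main obstacle; once that is in place the remainder is formal.
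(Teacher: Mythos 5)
Your proof is correct, and the Cauchy--Binet step you flag as the delicate point does go through: for each $k\in\{1,\dots,n\}$ one has $\det(E_k P_K)=\sum_L\det(P_K^L)\,(\tilde\delta_L\circ p)_k$, because the summand for $L$ vanishes when $k\notin L$ (the first row of the column-submatrix is zero) and, when $k\in L$, the first-row expansion reproduces exactly the signed $m\times m$ minor that is the $k$-th component of $\tilde\delta_L\circ p$; so the lift $\eta=\sum_L\det(P_K^L)\,\tilde\delta_L$ is an honest constant-coefficient element of $\Theta_X$ and the identity holds uniformly in $k$, with the distinction $k\leq i$ versus $k>i$ playing no role. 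Your route shares the paper's skeleton (explicit generators of $\Theta_{p^{-1}(X)}$ from Theorem \ref{Xicisqh}, plus Proposition \ref{inclLow} for the easy inclusion) but executes the hard inclusion $\Theta_{p^{-1}(X)}\subseteq\Low_X(p)$ differently. The paper normalizes first: it picks a rotation $R$ of $\C^n$ carrying the image of $p$ onto $\{x_{i+1}=\cdots=x_n=0\}$, so that $q=R\circ p$ becomes the standard coordinate inclusion; then each generator of $\Theta_{q^{-1}(Z)}$, $Z=R(X)$, is $\pi_i(\eta\circ q)$ for a \emph{single} generator $\eta$ of $\Theta_Z$ whose last $n-i$ components vanish along the subspace, so liftability is visible with no multilinear algebra, and one transports back via $\Low_Z(q)=\Low_X(p)$. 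Your argument needs no normalization: working with the matrix $P$ directly, each minor derivation downstairs is lowered from a constant linear combination of the minor generators of $\Theta_X$, with maximal minors of $P$ as coefficients. What the paper's reduction buys is trivial bookkeeping (one generator lifts to one generator); what yours buys is that you avoid the equivariance facts the paper must invoke ($\Theta_{R(X)}=(R^{-1})^*(R(\Theta_X))$ and $\Low_Z(q)=\Low_X(p)$), as well as the unstated point that $R$ must really be combined with a linear automorphism of $\C^i$ to make $R\circ p$ literally the standard inclusion. Both proofs treat the easy inclusion identically (reducedness of a positive-dimensional \textsc{icis} plus the chain rule), and both share the same small gloss there, which you at least make explicit: Proposition \ref{inclLow} is stated for $p\in L_{i,n}$, so one must observe that its chain-rule proof extends verbatim to an arbitrary immersive linear map.
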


\begin{proof}

Let $H$ denote the image of $p$.
Let $R:\C^n\to\C^n$ be a rotation such that $R(H)$ is given by the equations $x_{i+1}=\cdots=x_n=0$.
Let $q=R\circ p:\C^i\to \C^n$. Therefore $q(x_1,\dots, x_i)=(x_1,\dots, x_i,0,\dots, 0)$, for all $(x_1,\dots, x_i)\in \C^i$.
Let $Z=R(X)$. 

Let $Y=q^{-1}(Z)=p^{-1}(X)$. By hypothesis, $Y$ is a homogeneous \textsc{icis}. Let $f=h\circ R^{-1}$. Therefore $Z=f^{-1}(0)$ and $Y=(f\circ q)^{-1}(0)$.
Let us write $f=(f_1,\dots, f_m):(\C^i,0)\to (\C^m
,0)$.

Let us consider the matrices
$$
A_Y=\left[
            \begin{array}{ccc}
              \frac{\partial}{\partial x_1} & \cdots & \frac{\partial}{\partial x_i} \\
              \frac{\partial(f_1\circ q)}{\partial x_1} & \cdots & \frac{\partial(f_1\circ q)}{\partial x_i} \\
              \vdots & \, & \vdots \\
              \frac{\partial(f_m\circ q)}{\partial x_1} & \cdots & \frac{\partial(f_m\circ q)}{\partial x_i} \\
            \end{array}
          \right], \hspace{1cm}
A_Z=\left[
            \begin{array}{ccc}
              \frac{\partial}{\partial x_1} & \cdots & \frac{\partial}{\partial x_n} \\
              \frac{\partial f_1}{\partial x_1} & \cdots & \frac{\partial f_1}{\partial x_n} \\
              \vdots & \, & \vdots \\
              \frac{\partial f_m}{\partial x_1} & \cdots & \frac{\partial f_m}{\partial x_n} \\
            \end{array}
          \right].
$$
By Theorem \ref{Xicisqh},
we have that $\Theta_Y$ is generated by $\{\theta^{(i)},(f_\ell\circ q)\frac{\partial}{\partial x_j}: \ell=1,\dots, m,\,j=1,\dots, i\}$ and the derivations coming from $\I_{m+1}(A_Y)$. Let us denote this generating system by $W_Y$.
Also by Theorem \ref{Xicisqh}, a generating system of $\Theta_Z$ is given by
$\{\theta^{(n)},f_\ell\frac{\partial}{\partial x_j}: \ell=1,\dots, m,\,j=1,\dots, n\}$ and the derivations coming from $\I_{m+1}(A_Z)$.
Let us denote this generating system of $\Theta_Z$ by $W_Z$.
Given indices $1\leq j_1<\cdots <j_{m+1}\leq i$, let $\theta_{j_,\dots, j_{m+1}}$ denote
the minor of $A_Y$ formed by the columns $j_1,\dots, j_{m+1}$ of $A_Y$ and let $\theta'_{j_1,\dots, j_{m+1}}$ denote the analogous minor of $A_Z$.
Then, it is immediate to check that the following relations hold:
\begin{align*}
\theta^{(i)}&=\pi_i(\theta^{(n)}\circ q)\\
(f_\ell\circ q)\frac{\partial}{\partial x_j}&=\pi_i\big((f_\ell\frac{\partial}{\partial x_j})\circ q\big),\,\, \textnormal{for all $\ell=1,\dots, m$, $j=1,\dots, i$}\\
\theta_{j_,\dots, j_{m+1}}&=\pi_i(\theta'_{j_,\dots, j_{m+1}}\circ q),\,\,\textnormal{for all $1\leq j_1<\cdots <j_{m+1}\leq i$}.
\end{align*}



Therefore we found that for any $\theta\in W_Y$, there exists some $\eta=(\eta_1,\dots,\eta_n)\in W_Z$ such that
$\theta=\pi_i(\eta\circ q)$ and $\eta_{i+1}=\cdots=\eta_n=0$. In particular
$\eta_{i+1}\circ q=\cdots=\eta_n\circ q=0$, which means that $\eta$ is liftable with respect to $q$.
Therefore
\begin{equation}\label{inclbasica}
\Theta_Y\subseteq \Low_Z(q).
\end{equation}
An elementary computation shows that $\Theta_{Z}=(R^{-1})^*(R(\Theta_X))$, where $R(\Theta_X)=\{R(\eta): \eta\in \Theta_X\}$.
Hence $\Low_Z(q)=\Low_X(p)$ and thus (\ref{inclbasica}) implies that $\Theta_Y\subseteq \Low_X(p)$.

By hypothesis, the map $h\circ p:(\C^i,0)\to (\C^m,0)$ is an \textsc{icis} with $(h\circ p)^{-1}(0)$ of dimension $i-m\geq 1$. Then $h\circ p$ is reduced
(see \cite[p.\,7]{Looijenga}). Thus, as a direct application of Proposition \ref{inclLow}, the reverse inclusion $\Theta_Y\supseteq \Low_X(p)$ follows. Therefore $\Theta_Y=\Low_X(p)$.
\end{proof}


\begin{rem}
We have found that equality (\ref{LowandDer}) holds in a wide variety of examples where $X$ has not an isolated singularity at the origin.
We conjecture that Proposition \ref{LowX} holds at least when $X$ is homogeneous, not necessarily an \textsc{icis} with isolated singularity at the origin.
In particular, when $X$ is a generic determinantal variety.
\end{rem}

\section{Bruce-Roberts numbers and linear sections}\label{linearsects}

Let us fix a function $f\in\O_n$ and a complex analytic subvariety $X\subseteq (\C^n,0)$.
If $i\in\{1,\dots, n\}$, then we denote by $L_{i,n}(f,X)$ the set of those $p\in L_{i,n}$ such that
$\mu_{p^{-1}(X)}(f\circ p)$ is finite. As is already known in the case $X=\C^n$, the set
$L_{i,n}(f,X)$ can be strictly contained in $L_{i,n}$ even if $\mu_X(f)$ is finite.



Let us suppose that $f$ has an isolated singularity at the origin and let $i\in\{1,\dots, n\}$.
In \cite[p.\,299]{Cargese} Teissier showed that there exists a dense Zariski open set $U_{i,n}$ of the Grassmannian variety of
linear subspaces of dimension $i$ of $\C^n$ such that
the topological type of $f^{-1}(0)\cap H$ does not depend on $H$ whenever $H\in U_{i,n}$.
This leads to the definition of $\mu^{(i)}(f)$ as the Milnor number of the restriction
$f\vert_H$, where $H$ varies in $U_{i,n}$.
Moreover, due to the semicontinuity of the colength of ideals, the minimium possible value of the colength of the ideal
$J(f\circ p)=\langle \frac{\partial (f\circ p)}{\partial x_1},\dots,\frac{\partial (f\circ p)}{\partial x_i}\rangle$, where $p$ varies in $L_{i,n}(f,\C^n)$,
is actually equal to $\mu^{(i)}(f)$.
Motivated by this version of $\mu^{(i)}(f)$ we introduce in Definition \ref{defnustarX} the analogous concept in the context of Bruce-Roberts' Milnor numbers.

\begin{lem}\label{muXmin}
Let $f\in\O_n$ and let $X$ be an analytic subvariety of $(\C^n,0)$. Let $i\in\{1,\dots, n\}$ and let $p\in L_{i,n}(f,X)$. Then
$$
\mu_{p^{-1}(X)}(f\circ p)\geq \mu^{(i)}(f).
$$
\end{lem}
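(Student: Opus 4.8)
We must show that for any $p \in L_{i,n}(f,X)$, the Bruce-Roberts' Milnor number $\mu_{p^{-1}(X)}(f\circ p)$ is at least $\mu^{(i)}(f)$, the generic Milnor number of an $i$-dimensional linear slice of $f$. Let me examine the structure. The key relationship is $J_{p^{-1}(X)}(f\circ p) \subseteq J(f\circ p)$, since $\Theta_{p^{-1}(X)} \subseteq \O_i^i$ and any derivation tangent to $p^{-1}(X)$ applied to $f\circ p$ lands inside the full Jacobian ideal. This is an instance of the general inclusion $J_Y(g)\subseteq J(g)$ noted in the paragraph following Definition \ref{muXf}, applied to $Y = p^{-1}(X)$ and $g = f\circ p$.

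Let me think about what each side means. Let $\mathbf{\text{think}}$ through the plan.

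First I would record the chain of inequalities. Since $J_{p^{-1}(X)}(f\circ p)\subseteq J(f\circ p)$ as ideals in $\O_i$, taking colengths reverses the inclusion, giving
$$
\mu_{p^{-1}(X)}(f\circ p)=\dim_\C\frac{\O_i}{J_{p^{-1}(X)}(f\circ p)}\geq \dim_\C\frac{\O_i}{J(f\circ p)}=\mu(f\circ p).
$$
This already reduces the problem to comparing $\mu(f\circ p)$ with $\mu^{(i)}(f)$. The point is that $f\circ p$ is the restriction of $f$ to the $i$-dimensional linear subspace $H=\operatorname{im}(p)\subseteq\C^n$, parametrized by $p$; its Milnor number $\mu(f\circ p)$ is the Milnor number of the singularity of $f\vert_H$ at the origin.

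The second and decisive step is the minimality of the generic value. By Teissier's genericity result recalled just before this lemma, the minimum possible value of $\dim_\C \O_i/J(f\circ p)$, as $p$ ranges over $L_{i,n}(f,\C^n)$, equals $\mu^{(i)}(f)$; equivalently, $\mu^{(i)}(f)$ is the Milnor number attained on a dense Zariski-open set $U_{i,n}$ in the Grassmannian, and by the upper semicontinuity of the colength of ideals it is the \emph{minimal} such value. I would invoke this directly: any particular $p\in L_{i,n}(f,X)\subseteq L_{i,n}(f,\C^n)$ has $\mu(f\circ p)\geq \mu^{(i)}(f)$. Combining with the previous display yields $\mu_{p^{-1}(X)}(f\circ p)\geq \mu^{(i)}(f)$.

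The main obstacle to watch is ensuring the finiteness hypotheses line up so both colengths are genuinely finite and the semicontinuity argument applies. The hypothesis $p\in L_{i,n}(f,X)$ means precisely $\mu_{p^{-1}(X)}(f\circ p)<\infty$, and by the inclusion $J_{p^{-1}(X)}(f\circ p)\subseteq J(f\circ p)$ this forces $J(f\circ p)$ to have finite colength as well (the same reasoning as in the remark after Definition \ref{muXf} that finite $\mu_X(f)$ implies finite $\mu(f)$). Hence $f\circ p$ has an isolated singularity and $\mu(f\circ p)$ is finite, so $p\in L_{i,n}(f,\C^n)$ and the minimality statement for $\mu^{(i)}(f)$ is applicable. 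I expect the argument to be short; the only care needed is in citing Teissier's minimality characterization of $\mu^{(i)}(f)$ in its ideal-colength form rather than its purely topological form, which is exactly the formulation given in the paragraph preceding the lemma.
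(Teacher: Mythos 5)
Your proposal is correct and follows essentially the same route as the paper: the inclusion $J_{p^{-1}(X)}(f\circ p)\subseteq J(f\circ p)$ gives $\mu_{p^{-1}(X)}(f\circ p)\geq \mu(f\circ p)$ together with the needed finiteness, and Teissier's minimality/semicontinuity characterization of $\mu^{(i)}(f)$ (recalled just before the lemma) gives $\mu(f\circ p)\geq\mu^{(i)}(f)$. Your write-up merely makes explicit the finiteness bookkeeping that the paper's terser proof leaves implicit.
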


\begin{proof}
The inclusion $J_{p^{-1}(X)}(f\circ p)\subseteq J(f\circ p)$ is obvious, by the definition of $J_{p^{-1}(X)}(f\circ p)$.
The condition $p\in L_{i,n}(f,X)$ means that $J_{p^{-1}(X)}(f)$ has finite colength. Therefore $\mu(f\circ p)$ is finite and
thus $\mu_{p^{-1}(X)}(f\circ p)\geq \mu(f\circ p)\geq \mu^{(i)}(f)$.
\end{proof}




\begin{defn}\label{defnustarX}
Let $f\in\O_n$ and let $X$ be an analytic subvariety of $(\C^n,0)$.
For any $i\in\{1,\dots, n\}$ such that $L_{i,n}(f,X)\neq\emptyset$, we define the number
$$
\mu_X^{(i)}(f)=\min_{p\in L_{i,n}(f,X)}\mu_{p^{-1}(X)}(f\circ p).
$$
\end{defn}

If $L_{i,n}(f,X)=\emptyset$, then we set $\mu_X^{(i)}(f)=\infty$.
We denote the vector $(\mu_X^{(n)}(f),\dots, \mu_X^{(1)}(f))$ by $\mu_X^*(f)$.
We refer to $\mu_X^*(f)$ as the vector of {\it mixed Bruce-Roberts numbers of $f$ with respect to $X$}.

If $f\in\O_n$, $f\neq 0$, then the {\it order} of $f$ is defined as
$\ord(f)=\max\{r\in\Z_{\geq 1}: f\in\m_n^r\}$.
The order $\ord(I)$ of a non-zero ideal $I$ of $\O_n$ is defined analogously.

\begin{prop}
Let $X$ be an analytic subvariety of $(\C^n,0)$ with $\dim(X)<n$, $n\geq 2$. Let $f\in\O_n$, $f\neq 0$. Then $\mu_X^{(1)}(f)=\ord(f)$.
Consequently, if $\mu_X(f)<\infty$ and $\ord(f)\geq 3$, then
$$
\mu_X(f)\geq \mu_X^{(1)}(f).
$$
\end{prop}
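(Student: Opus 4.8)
The plan is to reduce everything to a one-variable computation along a generic line through the origin. I would parametrize an element of $L_{1,n}$ as $p(t)=(t,a_2t,\dots,a_nt)$ with direction $v=(1,a_2,\dots,a_n)$, and observe that since $\dim(X)<n$ the union of all lines through $0$ contained in $X$ is itself contained in $X$, hence has dimension at most $\dim(X)<n$; so for a generic $v$ the line $p(\C)$ is not contained in $X$. For such $p$ the germ $p^{-1}(X)$ is just $\{0\}\subseteq\C$. Writing $g=f\circ p\in\O_1$, the module $\Theta_{\{0\}}$ is generated by $x_1\frac{\partial}{\partial x_1}$, so that $J_{\{0\}}(g)=\langle x_1 g'\rangle$ and therefore $\mu_{\{0\}}(g)=\dim_\C\O_1/\langle x_1 g'\rangle=\ord(g)$. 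This one-variable identity is the computational heart of the statement; it refines Lemma \ref{muXmin}, which only gives $\mu_{p^{-1}(X)}(f\circ p)\geq\mu^{(1)}(f)=\ord(f)-1$, the extra unit coming precisely from the factor $x_1$ in $J_{\{0\}}(g)$.

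For the upper bound I would choose $v$ generic enough that, besides $p(\C)\not\subseteq X$, the leading form $f_d$ of $f$ (with $d=\ord(f)$) satisfies $f_d(v)\neq 0$; both are non-empty Zariski-open conditions, so such $v$ exists. Then $\ord(f\circ p)=\ord(f)$ and, by the previous paragraph, $\mu_{p^{-1}(X)}(f\circ p)=\ord(f)<\infty$, so this $p$ lies in $L_{1,n}(f,X)$ and realizes the value $\ord(f)$; hence $\mu_X^{(1)}(f)\leq\ord(f)$. For the lower bound, note that $p^*(\m_n)\subseteq\m_1$ forces $f\circ p\in\m_1^{\ord(f)}$, so $\ord(f\circ p)\geq\ord(f)$ for every $p\in L_{1,n}$; combined with the identity $\mu_{p^{-1}(X)}(f\circ p)=\ord(f\circ p)$ valid whenever $p(\C)\not\subseteq X$, this gives $\mu_{p^{-1}(X)}(f\circ p)\geq\ord(f)$ for all such $p$.

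The delicate point — and the step I expect to be the main obstacle — is the interaction with lines contained in $X$: for such a $p$ one has $p^{-1}(X)=(\C,0)$ and $\mu_{p^{-1}(X)}(f\circ p)=\ord(f\circ p)-1$, which a priori could undercut $\ord(f)$. The hypothesis $\dim(X)<n$ is exactly what confines these exceptional directions to a proper subvariety, so that the generic line escapes $X$ and the clean formula $\mu_{p^{-1}(X)}(f\circ p)=\ord(f)$ governs the minimum; I would make this genericity precise using the upper semicontinuity of colength already invoked in the definition of $\mu_{H^{(i)}}(f)$. Putting the two bounds together then yields $\mu_X^{(1)}(f)=\ord(f)$.

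Finally, for the consequence I would argue that $\mu_X(f)<\infty$ forces $J(f)$ to have finite colength (since $J_X(f)\subseteq J(f)$), so $f$ has an isolated singularity and $\mu_X(f)\geq\mu(f)$. Every partial derivative of $f$ lies in $\m_n^{\ord(f)-1}$, so $J(f)\subseteq\m_n^{\ord(f)-1}$ and hence
$$
\mu(f)=\dim_\C\frac{\O_n}{J(f)}\geq\dim_\C\frac{\O_n}{\m_n^{\ord(f)-1}}=\binom{\ord(f)+n-2}{n}.
$$
For $n\geq 2$ this coefficient is at least $\binom{\ord(f)}{2}=\frac{\ord(f)(\ord(f)-1)}{2}$, which is $\geq\ord(f)$ precisely when $\ord(f)\geq 3$ — this is exactly where the hypothesis $\ord(f)\geq 3$ is used. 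Combining with the first part gives $\mu_X(f)\geq\mu(f)\geq\ord(f)=\mu_X^{(1)}(f)$, as required.
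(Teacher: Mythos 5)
Your computational core coincides with the paper's own proof: both take a line $p(t)=(t,a_2t,\dots,a_nt)$ with $p^{-1}(X)=\{0\}$, use $\Theta_{\{0\}}=\m_1$ to get $J_{\{0\}}(f\circ p)=\langle t\,(f\circ p)'\rangle$, whose colength is $\ord(f\circ p)$, and then a genericity argument to ensure $\ord(f\circ p)=\ord(f)$ (the paper via the ideal $\langle x_1\frac{\partial f}{\partial x_1},\dots,x_n\frac{\partial f}{\partial x_n}\rangle$, you via the leading form; both are fine). Your proof of the final inequality is also correct and slightly more elementary than the paper's, which quotes $\mu(f)\geq(\ord(f)-1)^n$ where you use only $J(f)\subseteq\m_n^{\ord(f)-1}$ and a monomial count; both arguments use $\ord(f)\geq 3$ and $n\geq 2$ at exactly the same spot.

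The genuine gap is the point you yourself flag as delicate, and the repair you propose does not work. By Definition \ref{defnustarX}, $\mu_X^{(1)}(f)$ is the minimum of $\mu_{p^{-1}(X)}(f\circ p)$ over \emph{all} $p\in L_{1,n}(f,X)$, not over generic $p$, so lines contained in $X$ cannot be discarded on the grounds that they are non-generic; and upper semicontinuity of colength is precisely what fails here. When the line falls into $X$, the module $\Theta_{p^{-1}(X)}$ jumps \emph{up}, from $\m_1\frac{d}{dt}$ to all of $\O_1\frac{d}{dt}$, so the ideal $J_{p^{-1}(X)}(f\circ p)$ gets larger and the colength drops \emph{below} the generic value instead of above it. Concretely, take $X=\{x_2=0\}\subseteq\C^2$ and $f=x_1^2+x_2^2$: every generic $p$ gives the value $2=\ord(f)$, but $p(t)=(t,0)$ satisfies $p^{-1}(X)=(\C,0)$, hence $J_{p^{-1}(X)}(f\circ p)=\langle t\rangle$ and $\mu_{p^{-1}(X)}(f\circ p)=1<\ord(f)$, and this $p$ does lie in $L_{1,2}(f,X)$. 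So with the definitions read literally, the exceptional directions can genuinely undercut $\ord(f)$, and no genericity or semicontinuity argument can close the gap, because the inequality goes the wrong way. In fairness to you, the paper's own proof has exactly the same defect: it only considers $p$ with $p^{-1}(X)=\{0\}$ and silently identifies the value computed there with the minimum. What both your argument and the paper's actually establish is the equality $\mu_X^{(1)}(f)=\ord(f)$ with the minimum restricted to those $p$ satisfying $p^{-1}(X)=\{0\}$ (equivalently, with $\mu_X^{(1)}(f)$ read as the generic value of $\mu_{p^{-1}(X)}(f\circ p)$); to prove the statement as written one would need $\ord(f\circ p)\geq\ord(f)+1$ for every line inside $X$ belonging to $L_{1,n}(f,X)$, which the example above shows is false.
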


\begin{proof}
Since $\dim(X)<n$, the intersection of $X$ with a generic line passing through the origin is equal to $\{0\}$.
Let $p\in L_{1,n}$ such that $p^{-1}(X)=\{0\}$. Let $Y=\{0\}\subseteq (\C,0)$.

Let us write $p$ as $p(x)=(x, a_2x, \dots, a_nx)$, for some $a_2,\dots, a_n\in\C$, for all $x\in \C$.
Let us take coordinates $x_1,\dots, x_n$ in $\C^n$.
Since $\Theta_Y=\m_1$, we have
$$
J_Y(f\circ p)=\left\langle x\frac{\partial (f\circ p)}{\partial x}\right\rangle=
\left\langle  x\frac{\partial f}{\partial x_1}(p(x))+a_2x\frac{\partial f}{\partial x_2}(p(x))+
\cdots+a_n x\frac{\partial f}{\partial x_n}(p(x))     \right\rangle.
$$
Let $I(f)$ denote the ideal of $\O_n$ generated by
$x_1\frac{\partial f}{\partial x_1},\dots, x_n\frac{\partial f}{\partial x_n}$. We have
$J_Y(f\circ p)\subseteq p^*(I(f))$ and $\ord(J_Y(f\circ p))=\ord(p^*(I(f))=\ord(I(f))=\ord(f)$, for a generic choice of the coefficients
$a_2,\dots, a_n$. Then
$$
\mu_X^{(1)}(f)=\dim_\C\frac{\O_1}{J_Y(f\circ p)}= \dim_\C\frac{\O_1}{p^*(I(f))}=\ord(f).
$$

If additionally we assume that $\mu_X(f)<\infty$, then $\mu_X(f)\geq \mu(f)\geq (\ord(f)-1)^n$.
We finally have that $(\ord(f)-1)^n\geq \ord(f)$, since we are assuming that $\ord(f)\geq 3$ and $n\geq 2$.
\end{proof}

The following example shows that the sequence $\mu^*_X(f)$ is not decreasing in general.

\begin{ex}
Let $f\in\O_3$ be the function given by $f(x,y,z)=x+y+z$ and let $X=\{(x,y,z)\in\C^3: xyz=0\}$.
We have $\Theta_{X}=\left\langle (x,0,0),(0,y,0),(0,0,z)\right\rangle$.
Therefore $\mu_X(f)=1$.

Let $p\in L_{2,3}$ be given by
$p(x,y)=(x,y,ax+by)$, where $a,b\in \C\setminus\{-1,0\}$. Therefore
$$
p^{-1}(X)=\left\{(x,y)\in\C^2: xy(ax+by)=0\right\}.
$$
By Theorem \ref{Derloghom}, we have that
$
\Theta_{p^{-1}(X)}=\left\langle (x,y),(ax^2+2bxy, -2axy-by^2)\right\rangle.
$
Thus
$$
J_{p^{-1}(X)}(f\circ p)=\left\langle x(a+1)+y(b+1), a(a+1)x^2+2(b-a)xy-b(b+1)y^2\right\rangle\subseteq\O_2.
$$
This implies that
$$
\mu_{X}^{(2)}(f)=\mu_{p^{-1}(X)}(f\circ p)=\dim_\C\frac{\O_2}{J_{p^{-1}(X)}(f\circ p)}=2.
$$
It is immmediate to check that $\mu_{X}^{(1)}(f)=1$. So $\mu_{X}^{*}(f)=(1,2,1)$.
\end{ex}

\begin{ex}
Let us consider the function $h:(\C^4,0)\to (\C,0)$ given by $h(x,y,z,t)=x^a+y^a+z^a+t^a$, for some $a\in\Z_{\geq 2}$. Let $X=h^{-1}(0)$.
Let $f\in\O_4$ be given by $f(x,y,z,t)=\alpha x^b+\beta y^b+\gamma z^b+\delta t^b$, where $b\in\Z_{\geq 2}$,
and $\alpha, \beta,\gamma,\delta$ denote generic complex coefficients. Therefore, we can apply \cite[Corollary 3.12]{NOT2011}
to deduce that 
\begin{align*}
\mu_X(f)&= b^4+(a-4)b^3+(a^2-4a+6)b^2+(a^3-4a^2+6a-4)b\\
\mu^{(3)}_X(f)&= b^3+(a-3)b^2+(a^2-3a+3)b \\
\mu^{(2)}_X(f)&= b^2+(a-2)b\\
\mu^{(1)}_X(f)&= b.
\end{align*}
\end{ex}

If $p\leq n$, given an integer $i\in\{1,\dots, n-p+1\}$, we denote by $\mu^{(i)}(g)$ the Milnor number of the
$\icis$ given by $(g_1,\dots, g_p, h_1,\dots, h_{n-p-i+1}):(\C^n,0)\to (\C^{n-i+1},0)$, where $h_1,\dots, h_{n-p-i+1}$ is a family of generic
linear forms of $\C[x_1,\dots, x_n]$ (see \cite{GH} or \cite{OSY}). Then $\mu^{(n-p+1)}(g)=\mu(g)$. Let us set $\mu^{(0)}(g)=1$. Hence,
as in the case $p=1$ (see \cite[p.\,300]{Cargese}), we also have a decreasing sequence of integers
$$
\mu^{(n-p+1)}(g)\geq \mu^{(n-p)}(g)\geq \cdots \geq \mu^{(1)}(g)\geq \mu^{(0)}(g).
$$
We will denote the vector $(\mu^{(n-p+1)}(g), \dots, \mu^{(1)}(g), \mu^{(0)}(g))$ by $\mu^*(g)$
and we refer to it as the {\it $\mu^*$-sequence of $g$}. Let us remark that, by (\ref{casequidim}),
we have
$$
\mu^{(1)}(g)=\dim_\C\frac{\O_n}{\langle g_1,\dots, g_p,h_1,\dots, h_{n-p}\rangle}-1,
$$
where $h_1,\dots, h_{n-p}$ is a family of generic linear forms of $\C[x_1,\dots, x_n]$.

Let $g:(\C^n,0)\to (\C^p,0)$ be an isolated complete intersection singularity.
We recall that, if $n-p\geq 1$, then the ring $\O_n/ \langle g_1,\dots, g_p\rangle$ is reduced (see \cite[p.\,7]{Looijenga}).
Following \cite[p.\,215]{Gaffney96}, we denote by $JM(g)$ the submodule of $(\O_n/\langle g_1,\dots, g_p\rangle)^p$ generated by the partial derivatives
$\frac{\partial g}{\partial x_1},\dots, \frac{\partial g}{\partial x_n}$.
Given a module of finite colength $M$ of a free module $R^p$, where $R$ denotes a given Noetherian local ring, then we denote by $e(M)$ the
Buchsbaum-Rim multiplicity of $M$.



\begin{prop}\label{split}
Let $h\in \C[x_1,\dots, x_n]$ be a homogeneous polynomial with isolated singularity at the origin and let $X=h^{-1}(0)$, $n\geq 2$.
Let $f\in\O_n$ such that $\mu_X(f)<\infty$. Then, for all $i\in\{2,\dots, n\}$:
\begin{equation}\label{nuis2}
\mu^{(i)}_X(f)=\mu^{(i)}(f)+\mu^{(i-1)}(f,h).
\end{equation}
Moreover, we have
\begin{equation}\label{sumademuisX}
\mu_X(f)+\mu_X^{(n-1)}(f)=
e\left(J(f)\frac{\O_n}{\langle f\rangle}\right)+e\big(JM(f,h)\big).
\end{equation}
\end{prop}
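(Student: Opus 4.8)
The plan is to prove the first family of equalities by applying Theorem \ref{claucashomog} to generic linear sections, and then to obtain the ``moreover'' part by specializing to $i=n-1$ and invoking the hypersurface and \textsc{icis} versions of Teissier's formula.

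Fix $i\in\{2,\dots,n\}$ and take $p\in L_{i,n}$ with image the $i$-plane $H=p(\C^i)$. Since $\mu_X(f)<\infty$ forces $J(f)$, and hence $f$, to have an isolated singularity (because $J_X(f)\subseteq J(f)$), and since $h$ is homogeneous with isolated singularity and $i\geq 2$, for a generic choice of $p$ the restriction $h\circ p=h|_H$ is again homogeneous with an isolated singularity, so $p^{-1}(X)=(h\circ p)^{-1}(0)$ is a homogeneous hypersurface with isolated singularity in $\C^i$. I would apply Theorem \ref{claucashomog} to the pair $\big(p^{-1}(X),f\circ p\big)$, obtaining
$$
\mu_{p^{-1}(X)}(f\circ p)=\mu(f\circ p)+\mu(f\circ p,h\circ p).
$$
By definition of the Teissier sequence $\mu(f\circ p)=\mu(f|_H)=\mu^{(i)}(f)$ for generic $H$; and since $(f,h)$ is itself an \textsc{icis} (again by Theorem \ref{claucashomog}), restricting it to a generic $i$-plane is the same as cutting by $n-i$ generic hyperplanes, so $\mu(f\circ p,h\circ p)=\mu^{(i-1)}(f,h)$, the index matching the convention $n-2-(i-1)+1=n-i$. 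Thus the generic value of $\mu_{p^{-1}(X)}(f\circ p)$ equals $\mu^{(i)}(f)+\mu^{(i-1)}(f,h)$.

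To identify this generic value with the minimum in Definition \ref{defnustarX}, I would argue by upper semicontinuity. On the dense open set of $p$ for which $h\circ p$ has an isolated singularity, Theorem \ref{Derloghom} gives
$$
J_{p^{-1}(X)}(f\circ p)=\langle \theta^{(i)}(f\circ p)\rangle+\J(f\circ p,h\circ p),
$$
which is an algebraic family of ideals of $\O_i$ depending polynomially on the matrix entries parametrizing $p$; its colength is upper semicontinuous and therefore attains its minimal (generic) value on a dense open set, equal to $\mu^{(i)}(f)+\mu^{(i-1)}(f,h)$. Since this generic value is finite, generic $p$ lie in $L_{i,n}(f,X)$, giving $\mu^{(i)}_X(f)\leq \mu^{(i)}(f)+\mu^{(i-1)}(f,h)$; combined with the pointwise bound $\mu_{p^{-1}(X)}(f\circ p)=\mu(f\circ p)+\mu(f\circ p,h\circ p)\geq \mu^{(i)}(f)+\mu^{(i-1)}(f,h)$ valid on that open set, one obtains equality. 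The delicate point here is to ensure that no degenerate section (where $h\circ p$ fails to have an isolated singularity, so that $\Theta_{p^{-1}(X)}$ may be larger) undercuts the generic value; this is where the semicontinuity argument for the colength must be handled with care.

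For the ``moreover'' part I would specialize the first equality to $i=n-1$, giving $\mu_X^{(n-1)}(f)=\mu^{(n-1)}(f)+\mu^{(n-2)}(f,h)$, and add it to $\mu_X(f)=\mu(f)+\mu(f,h)$ from Theorem \ref{claucashomog}, using $\mu(f)=\mu^{(n)}(f)$ and $\mu(f,h)=\mu^{(n-1)}(f,h)$. Regrouping yields
$$
\mu_X(f)+\mu_X^{(n-1)}(f)=\big[\mu^{(n)}(f)+\mu^{(n-1)}(f)\big]+\big[\mu^{(n-1)}(f,h)+\mu^{(n-2)}(f,h)\big].
$$
The first bracket equals $e\big(J(f)\frac{\O_n}{\langle f\rangle}\big)$ by Proposition \ref{primerasumademuis} with $i=n-1$, since $\frac{\O_n}{\langle f\rangle}$ has dimension $n-1$ and its top mixed multiplicity is the Samuel multiplicity. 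The second bracket is the \textsc{icis} analogue of Teissier's formula: for the \textsc{icis} $(f,h)$ (with $p=2$, so $n-p+1=n-1$) the top two terms of the $\mu^*$-sequence add up to the Buchsbaum--Rim multiplicity of the Jacobian module, $\mu^{(n-1)}(f,h)+\mu^{(n-2)}(f,h)=e\big(JM(f,h)\big)$. I expect this last ingredient, the identification of the top two terms of $\mu^*(f,h)$ with $e(JM(f,h))$ via Gaffney's theory (\cite{GH,Gaffney96}), to be the main obstacle: it is the single step not already available within the paper, and one must check that the index bookkeeping and the finite-colength hypothesis on $JM(f,h)$ match its requirements (with the boundary case $n=2$, where $\mu_X^{(1)}(f)=\ord(f)$ is computed separately, treated on its own).
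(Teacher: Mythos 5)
Your proposal is correct and follows essentially the same route as the paper's own proof: relation (\ref{nuis2}) is obtained there exactly as you do, by applying Theorem \ref{claucashomog} to a generic $p\in L_{i,n}$ with the same index bookkeeping $\mu(f\circ p,h\circ p)=\mu^{(i-1)}(f,h)$, and the ``moreover'' part is the same regrouping into Teissier's formula $\mu^{(n)}(f)+\mu^{(n-1)}(f)=e\left(J(f)\frac{\O_n}{\langle f\rangle}\right)$ plus the L\^e--Greuel/Gaffney identification $\mu^{(n-1)}(f,h)+\mu^{(n-2)}(f,h)=e\big(JM(f,h)\big)$, which the paper also settles by citation to \cite[Proposition 2.6]{Gaffney96}. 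The two points you flag as delicate are no worse in your write-up than in the paper: the Gaffney ingredient is a quoted result there too, and the possibility that a degenerate section (where $h\circ p$ has non-isolated singularity) undercuts the generic value in the minimum defining $\mu_X^{(i)}(f)$ is left implicit in the paper's proof as well, which simply identifies the generic value with the minimum.
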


\begin{proof}
Let us fix an index $i\in\{2,\dots, n\}$. For a general $p\in L_{i,n}$, we have that $h\circ p:(\C^i,0)\to (\C,0)$
is also homogeneous with an isolated singularity at the origin. By Theorem \ref{claucashomog}, we have
\begin{equation}\label{splitting}
\mu_{p^{-1}(X)}(f\circ p)=\mu(f\circ p)+\mu(f\circ p,h\circ p).
\end{equation}
Let $p_{i+1},\dots, p_n$ denote the last $n-i$ components of $p$.
The Milnor number of the map $(f\circ p,h\circ p):(\C^i,0)\to (\C^2,0)$ is equal to the
Milnor number of $(f, h, x_{i+1}-p_{i+1}, \dots, x_{n}-p_{n}):(\C^n,0)\to (\C^2\times \C^{n-i},0)$, which
in turn is equal to $\mu^{(i-1)}(f,h)$, by the definition of the sequence of mixed Milnor numbers of an isolated complete intersection
singularity. Then (\ref{splitting}) shows relation (\ref{nuis2}).

By \cite[Corollaire 1.5]{Cargese} we know that $\mu(f)+\mu^{(n-1)}(f)=e(J(f)\frac{\O_n}{\langle f\rangle})$. Moreover, by
the Lê-Greuel formula and the definition of the sequence $\mu^*(f,h)$, for a generic choice of a linear form $\ell_1\in\C[x_1,\dots, x_n]$, we have
that
$$
\mu^{(n-1)}(f,h)+\mu^{(n-2)}(f,h)=\mu(f,h)+\mu(f,h, \ell_1)=\dim_\C\frac{\O_n}{\langle f,h\rangle+\J(f,h,\ell_1)}.
$$
This last colength is equal to $e\big(JM(f,h)\big)$, by
\cite[Proposition 2.6]{Gaffney96}. Then, by using (\ref{nuis2}) in the case $i=n$, we obtain that
\begin{align*}
\mu_X(f)+\mu_X^{(n-1)}(f)&=\mu^{(n)}(f)+\mu^{(n-1)}(f,h)+\mu^{(n-1)}(f)+\mu^{(n-2)}(f,h)\\
&=e\left(J(f)\frac{\O_n}{\langle f\rangle}\right)+e\big(JM(f,h)\big).
\end{align*}
\end{proof}

{\bf Acknowledgement.}
Part of this work was developed during the stay of the first author at the
Departmento de Matemática of ICMC, Universidade de São Paulo at São Carlos (Brazil), in February and July 2018.
The first author wishes to thank this institution for hospitality and working conditions and to FAPESP for financial support.



\end{document}